\documentclass[a4paper,11pt]{amsart}
 
\usepackage{amsmath,amsfonts,amsthm,amssymb,graphicx} 

\AtBeginDocument{%
  \newcommand\sslash{\mathbin{/\mkern-5.5mu/}}%
} 

\usepackage{enumerate,hyperref,fullpage}

\theoremstyle{plain}
\newtheorem{thm}{Theorem}[section]
\newtheorem{lemm}[thm]{Lemma}
\newtheorem{lemma}[thm]{Lemma}

\newtheorem{mainthm}{Theorem}
\newtheorem{maincoro}{Corollary}
\theoremstyle{definition}

\newtheorem{definition}[thm]{Definition}
\newtheorem*{acknowledgement}{Acknowledgement}
\theoremstyle{remark}
\newtheorem{rem}[thm]{Remark}

\newcommand{\bP}{\mathbb{P}}

\newcommand{\bN}{\mathbb{N}}
\newcommand{\bZ}{\mathbb{Z}}
\newcommand{\bG}{\mathbb{G}}
\newcommand{\bR}{\mathbb{R}}

\newcommand{\bS}{\mathbb{S}}

\newcommand{\cO}{\mathcal{O}}
\newcommand{\cL}{\mathcal{L}}

\DeclareMathOperator{\SL}{SL}
\DeclareMathOperator{\GL}{GL}
\DeclareMathOperator{\PGL}{PGL}

\DeclareMathOperator{\BT}{BT}
\DeclareMathOperator{\MIL}{MinInvLoc}
\DeclareMathOperator{\MRL}{MinResLoc}
\DeclareMathOperator{\Hom}{Hom}

\DeclareMathOperator{\Res}{Res}
\DeclareMathOperator{\Int}{Int}
\DeclareMathOperator{\diag}{diag}
\DeclareMathOperator{\id}{id}
\DeclareMathOperator{\ev}{eval}
\DeclareMathOperator{\Stab}{Stab} 
\DeclareMathOperator{\Spec}{Spec}

\DeclareMathOperator{\rank}{rank}

\DeclareMathOperator{\spe}{sp}
\DeclareMathOperator{\RSS}{SSRL}
\DeclareMathOperator{\RS}{SRL}
\DeclareMathOperator{\sst}{\mathrm{sst}}
\DeclareMathOperator{\st}{\mathrm{st}}
\DeclareMathOperator{\ssr}{\mathrm{sst-red}}
\DeclareMathOperator{\sr}{\mathrm{st-red}}

\DeclareMathOperator{\conv}{Conv} 
\DeclareMathOperator{\grad}{grad}

\newcommand{\sln}[1]{\operatorname{SL}_{#1}}

\newcommand{\ol}[1]{\overline{#1}}

\newcommand{\shf}[1]{\mathcal{#1}}
\newcommand{\lshf}{\mathcal{L}}
\newcommand{\oshf}{\mathcal{O}}

\newcommand{\zahl}{\mathbb{Z}}
\newcommand{\rat}{\mathbb{Q}}
\newcommand{\real}{\mathbb{R}}

\newcommand{\proj}{\mathbb{P}}
\newcommand{\Proj}{\operatorname{Proj}}

\newcommand{\gmult}{\mathbb{G}_{m}}
\newcommand{\oring}{\mathcal{O}}

\newcommand{\ord}{\operatorname{ord}}

\DeclareMathOperator{\chara}{char}

\newcommand{\invs}{I_m}
\newcommand{\invsgen}[1]{I_{#1}}
\newcommand{\obody}{B_{\Lambda , x}}
\newcommand{\bdobody}{\underline{\partial}B_{\Lambda ,x}}

\numberwithin{equation}{section}

\title[]{Semistable reductions and minimalities of invariants for group scheme actions on projective schemes}
\author{Rin Gotou}
\address{Graduate School of Mathematical Sciences, The University of Tokyo, 3-8-1 Komaba Meguro-ku Tokyo 153-8914, Japan}
\email{gotou-rin@g.ecc.u-tokyo.ac.jp}

\author{Y\^usuke Okuyama}
\address{Division of Mathematics, Kyoto Institute of Technology, Sakyo-ku, Kyoto 606-8585 JAPAN}
\email{okuyama@kit.ac.jp}

\keywords{group scheme action, semistable reduction, minimal invariant locus, semistable reduction translation locus, building}
\subjclass[2020]{Primary 14L24; Secondary 14L30, 32P05}
\date{\today}

\begin{document}

\begin{abstract}
Let $K$ be an algebraically closed and complete non-archimedean and non-trivially valued field,
and let $G$ be a reductive group scheme acting on a flat projective scheme $X$ defined over the base ring of $K$-integers. For every $K$-point $x$ in $X$, we introduce the minimal invariant locus $\MIL_x$ and the semistable reduction translation locus $\RSS_x$ 
in the translation space $\BT_G(K)$ associated with $G_K$, which is a variant of Bruhat-Tits building, and 
establish not only the coincidence of those loci
but, under a mild completeness assumption, also their non-emptiness. In the dynamical setting which has been studied by Szpiro--Tepper--Williams and Rumely, the coincidence result is already new in higher dimensions, and the non-emptiness result includes Rumely's $1$-dimensional result
at least in the spherical complete case.
\end{abstract}
\maketitle

\section{Introduction}
\label{sec:intro}

let $K$ be an algebraically closed and complete non-archimedean and non-trivially valued field, and 
\begin{gather*}
\ord:K\to\bR\sqcup\{+ \infty\}
\end{gather*}
the order function on $K$ induced by the absolute value of $K$. Let us denote by $\cO_K$ and $\mathfrak{m}_K$ the
ring of $K$-integers and its unique maximal ideal, respectively, and by $k=k_K$ the residual field of $K$ (and then $k=\overline{k}$).   
Let $X$ be a flat $\oring_K$-projective scheme and $\cL$ an ample line bundle (a.k.a.\ ample invertible sheaf) of $X$. 
For every $m\in\bN$, the $\cO_K$-module 
\begin{gather*}
 L_m:=H^0(X, \cL^m)
\end{gather*}
is an $\oring_K$-lattice in the $K$-vector space $V_m:= H^0(X_K , \cL^m_K )$ canonically.

Let also $G$ be a reductive $\oring_K$-group scheme (as in \cite[Exp.XIX]{SGA3vol3}, a reductive group scheme is always smooth so flat here) acting on $X$, and we assume that $\cL$ is 
$G$-linearized by a linearization sheaf isomorphism 
$\sigma$ (see Section \ref{sec:git} for the details).
Let us focus on the totality of $\sigma_*(G_K)$-invariant sheaf sections
\begin{gather*}
    \invs :=L_m \cap V_m^{G_K},\quad\text{where}\quad V_m^{G_K}=\{s\in V_m:g\cdot s=s\text{ for any }g\in G(K)\},
\end{gather*}
of $\cL^m$ for each $m\in\bN$, and let us call the left orbit space 
\[ 
\BT_G(K) :=  G(\cO_K) \backslash (G(K)) 
\] 
the translation space associated to $G_K$, where the notation comes from ``Bruhat-Tits''.

First, for $m\gg 1$ (so that $\cL^m$ is very ample), 
to every $x\in X_K(K)$ and every $\rho\in \invs$ such that $\rho(x)\neq 0$, we associate 
the {\em order function $\ord \rho_x$ on $\BT_G(K)$} 
so that for every $[g]\in\BT_G(K)$,
\begin{gather}
(\ord\rho_x)([g]) 
= \ord\bigl(\rho (x)\bigr) - \min_{s \in L_m(\subset V_m)}
\ord \bigl(( \sigma_*(g) \cdot s) (x)\bigr);\label{eq:invord}
\end{gather}
the {\em minimum locus} $\MIL_x$ in $\BT_G(K)$ of this function $\ord\rho_x$ is independent of $m \gg 1$, of $\rho\in \invs$, and even of the choice of evaluation isomorphisms $\lshf^m|_x \simeq K$ of $K$-vector spaces (see Lemma \ref{th:const} and Remark \ref{th:eval}), 
and is called the {\em minimal invariant locus} for $x$. 
Such loci have been studied in the setting of the conjugation actions of $G=\PGL_{n+1}$, $\GL_{n+1}$, and $\SL_{n+1}$ on $X=X_{n,d}=\bP^{N-1}\supset\Hom_d(\bP^n)$,
$n,d\in\bN$,
equipped with $\cL = \oshf_{\proj^{N-1}}(1)$, where
$N=N_{n,d}$ is written down in \eqref{eq:dim} below, and the invariant sheaf section $\rho$
treated there was specifically the Macaulay resultant form $\Res$ on $\bP^{N-1}$; so, in this dynamical setting, our $\MIL_\varphi$, $\varphi\in(\Hom_d(\bP^n))(K)$, has been studied as the minimal {\em resultant} locus $\MRL_\varphi$ (Silverman \cite{silverman2007arithmetic}, Szpiro--Tepper--Williams \cite{SzpiroTepperWilliams2014simistabminres}, and Rumely \cite{Rumely17}; see also \cite{Okugeometric}).

Next, let $X^{\sst (\text{resp.\ }\st)}(\cL)$ be the semistable (resp.\ stable) locus in $X_K(K)$ under the $G_K$-action, and similarly, let $X^{\sst (\text{resp.\ }\st)}_k(\cL_k)$ be the semistable (resp.\ stable) locus in $X_k$ under the $G_k$-action; see Section \ref{sec:git} for the details on GIT-(semi)stability. The $G(\cO_K)$-invariant subset 
\begin{gather*}
 X^{\ssr (\text{resp.\ }\sr)}(\cL):= \spe^{-1}\bigl(X_k^{\sst (\text{resp.\ }\st)}(\cL_k)\bigr)
\end{gather*}
in $X_K(K)$ is called the semistable (resp.\ stable) reduction locus in $X_K(K)$, where 
\begin{gather*}
    \spe:X_K(K)\to X_k(k) 
\end{gather*}
is the specialization (or mod $\mathfrak{m}_K$ reduction) map.
The $\ssr$ (resp.\ $\sr$)-translation locus in $\BT_G (K)$ for a point $x\in X_K(K)$ is defined as
\begin{align*}
    \RSS_x \ (\text{resp.\ }\RS_x ) 
:= \bigl\{ [g]  \in \BT_G (K) : g \cdot x \in X^{\ssr (\text{resp.\ }\sr)}(\cL) \bigr\},
\end{align*}
which is certainly independent of $\rho\in \invs$ (and of $m\in\bN$); in the above mentioned studies of the $\PGL_{n+1}$-, $\GL_{n+1}$-, and $\SL_{n+1}$-conjugation actions on $\bP^{N-1}$, the relationship between $\MRL_\varphi$ and $\RSS_\varphi$ has been investigated, and when $n=1$, so does even their coincidence.

\subsection{Main results} The following is one of our principal results, where we establish such a coincidence between $\MIL_x$ and $\RSS_x$ in our more general setting.

\begin{mainthm} \label{th:genmininvloc} 
Let $K$ be an algebraically closed and complete non-archimedean and non-trivially valued field, 
let $X$ be a flat $\oring_K$-projective scheme, 
$G$ a reductive $\oring_K$-group scheme acting on $X$, 
and $\shf{L}$ an $G$-linearized ample line bundle of $X$, and assume that the base change $G_k$ of $G$ over the residual field $k$ is still reductive.
Then for every $x\in X_K(K)$, we have 
\begin{gather}
 \MIL_x=\RSS_x,
\end{gather}
and moreover, if $\RS_x \neq \emptyset$, then we have $\RS_x = \RSS_x$ and $\RS_x$ is a singleton. 
\end{mainthm}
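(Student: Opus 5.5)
Since both loci are unions of $G(\cO_K)$-cosets and $\ord\rho_x$ is built from the lattice $L_m$, I would first reduce the statement to a single comparison at the identity coset. By definition $[g]\in\RSS_x$ iff $g\cdot x\in X^{\ssr}(\cL)$. I would also check, directly from \eqref{eq:invord} and $G(K)$-invariance of $\rho$, that $(\ord\rho_x)([g])=(\ord\rho_{g\cdot x})([e])$, provided the evaluation isomorphisms are transported by $\sigma$ (Remark \ref{th:eval}). So it suffices to prove the following. For $y\in X_K(K)$, write
\[
\mu(y):=\ord\rho(y)-\min_{s\in L_m}\ord s(y)\ \ge 0.
\]
Then $\spe(y)$ is semistable for $G_k$ iff $\mu(y)=\min_{h\in G(K)}\mu(h\cdot y)$. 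Normalizing $y$ by a lift to $\cO_K$-coordinates, so that $\min_{s}\ord s(y)=0$, gives $\mu(y)=\ord\rho(y)$.

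Key step (a): semistable reduction implies minimality. I would use that $I_m\otimes k\to H^0(X_k,\cL_k^m)^{G_k}$ has image generating the invariants up to taking powers, for $m$ divisible enough. This uses $G_k$ reductive, via Seshadri's geometric reductivity over $\cO_K$. If $\spe(y)$ is semistable, some invariant $\bar\rho$ does not vanish at $\spe(y)$, so some $\rho'\in I_{m'}$ satisfies $\mu_{\rho'}(y)=0$. That is the absolute minimum, and by Lemma \ref{th:const} the minimum locus does not depend on the choice of invariant. Hence in this case the minimum value is $0$ for the chosen normalization of $\rho'$, and $y$ attains it.

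Key step (b): minimality implies semistable reduction. This is the main obstacle. Suppose $\spe(y)$ is unstable. By Hilbert--Mumford over $k$, there is a $k$-cocharacter $\bar\lambda$ of $G_k$ destabilizing $\spe(y)$. Since $G$ is reductive over the henselian (complete) $\cO_K$, a maximal split torus lifts, so $\bar\lambda$ lifts to $\lambda:\bG_m\to G$ over $\cO_K$. Take $t\in K$ with $\ord t>0$ small, rational, and chosen compatibly with weights. Then $h=\lambda(t)$ rescales the weight coordinates of $y$ so that every coordinate of weight $\ge0$ in $\spe(y)$ (all of them vanish modulo $\mathfrak m$) gains a positive order. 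Thus $\min_s\ord s(h\cdot y)$ strictly increases relative to $\ord\rho(h\cdot y)=\ord\rho(y)+0$, by invariance. So $\mu(h\cdot y)<\mu(y)$, contradicting minimality. To make this precise I would decompose $V_m$ into $\lambda$-weight spaces compatible with $L_m$, which is possible since $\lambda$ is defined over $\cO_K$. Then I would choose $\ord t$ smaller than the gap between the $\ord$ of positive-weight coordinates and $0$. If a minimum exists at all, this already gives $\MIL_x\subset\RSS_x$, and combined with (a) we get equality: by (a), semistable points realize the absolute value $0$, and every minimizer has value $0$ and hence semistable reduction. The subtlety to handle is when no minimum is attained; in that case both sides are empty (a shows $\RSS_x\neq\emptyset$ forces attainment).

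Stable case. If $[g]\in\RS_x$, then I would show that any $[h]\in\RSS_x$ equals $[g]$. Write $h=k_1\lambda(t)k_2$ by the Cartan decomposition $G(K)=G(\cO_K)T(K)G(\cO_K)$. Then $\spe(\lambda(t)k_2g\cdot x)$ is semistable, while $\spe(k_2g\cdot x)$ is stable. If $\ord t\neq0$, the limit argument of (b) run in reverse shows that the stable point $\spe(k_2 g x)$ has a nonnegative-weight destabilization, which is impossible for stable points unless the relevant weights vanish; that would force $\lambda$ into a finite stabilizer, a contradiction. Hence $\lambda(t)\in G(\cO_K)$, which gives the singleton claim and $\RS_x=\RSS_x$.
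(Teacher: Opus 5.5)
Your argument is correct in outline, but for the inclusion $\RSS_x\subset\MIL_x$ you take a genuinely different route from the paper.

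Your step (b) is essentially the paper's first half. Both use Hilbert--Mumford over $k$, lift the destabilizing torus to $\cO_K$ by Lemma \ref{lemm:toruslift}, and push the point along it. The paper just uses the whole maximal torus and packages the estimate as $\min_{f_{\Lambda,e}(\Gamma^r)}\delta_x=-\bdobody(0)$ together with $\Delta(\spe x,\Lambda_k)=\bdobody^{-1}(0)$.

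For step (a), however, the paper never lifts invariants. Given $[h]$, it places $[e]$ and $[h]$ in a common apartment $f_{\Lambda,e}$ with $\Lambda$ defined over $\cO_K$; this is the Cartan decomposition you invoke later anyway. Hilbert--Mumford for $\spe x$ and $\Lambda_k$ then gives $0\in\Delta(\spe x,\Lambda_k)$, i.e.\ $\bdobody(0)=0$, whence $\delta_x([h])\ge 0$. Concretely, some $w_i$ with $\ord b_i(x)=0$ satisfies $\langle w_i,v\rangle\le 0$.

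Your route produces an invariant $\rho'$ that is a unit at $y$, so $0$ becomes an absolute lower bound for $\mu_{\rho'}$. This is conceptually clean and avoids apartments in this direction. The price is that it imports geometric reductivity of $G$ over $\cO_K$, which is never Noetherian here because its value group is divisible. You must check that the lifting statement you cite is available over such a base, e.g.\ by Noetherian approximation. You also need $L_m\otimes k\cong H^0(X_k,\cL_k^m)$ for $m\gg1$.

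Since your stable case needs the Cartan decomposition anyway, the paper's single convex identity on apartments is more economical. It yields both inclusions and the uniqueness, and it is reused for Theorem \ref{th:existence}.

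A few repairs are needed.
\begin{enumerate}
\item In (b) the signs are garbled. For the destabilizing direction, every coordinate that is nonzero modulo $\mathfrak{m}_K$ has positive weight. You then need $0<\ord t<\ord b_i(y)/|w_i|$ for all $i$ with $w_i<0$.
\item In the stable case, decompose $hg^{-1}$ rather than $h$.
\item Also in the stable case, $T(K)/T(\cO_K)\cong\Gamma^r$. When $\Gamma\neq\mathbb{Q}$, the translation vector $v$ need not lie on a rational ray, so the element is not $\lambda(t)$ for a single cocharacter, and your finite-stabilizer remark does not apply. Argue with the full polytope instead. From $0\in\Int\Delta(\spe y',\Lambda_k)$ you get an $i$ with $\ord b_i(y')=0$ and $\langle w_i,v\rangle<0$, hence $\mu(\tau y')>\mu(y')=\min\mu$. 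Your step (a) then shows that $\spe(\tau y')$ is not semistable. This is exactly the paper's $\partial\bdobody(0)=\{0\}$.
\end{enumerate}
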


Theorem \ref{th:genmininvloc} yields the following, which generalizes Rumely's result in dimension $n=1$, where $\rho=\Res$, in the dynamical setting (see also \cite{SzpiroTepperWilliams2014simistabminres} for one inclusion
$\MRL_\varphi\supset\RSS_\varphi$ in higher dimension $n$).
\begin{maincoro}
Let $K$ be an algebraically closed and complete non-archimedean and non-trivially valued field, and pick any $n,d\in\bN$. Then under the $G=\PGL_{n+1}$, $\GL_{n+1}$, and $\SL_{n+1}$-conjugation actions on $\bP^{N_{n,d}-1}$ equipped with the $G$-linearized line bundle $\cO_{\bP^{N_{n,d}-1}}(1)$, for every 
$\varphi\in X_{n,d}$, we have $\MIL_\varphi=\RSS_\varphi$.

In particular, for every $\varphi \in \Hom_d(\bP^n)$, $\MRL_\varphi=\RSS_\varphi$.
\end{maincoro}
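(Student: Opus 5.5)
The statement to prove is the Corollary. My plan is to derive it directly from Theorem~\ref{th:genmininvloc}, so what needs checking is that the dynamical setting satisfies every hypothesis of that theorem.

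\emph{Base, scheme and line bundle.} $K$ is as in the theorem. The scheme $X=X_{n,d}=\bP^{N_{n,d}-1}_{\cO_K}$ is flat and projective over $\cO_K$, and $\cL=\cO_{\bP^{N_{n,d}-1}}(1)$ is ample on it.

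\emph{The group schemes.} $G=\GL_{n+1}$, $\SL_{n+1}$ and $\PGL_{n+1}$ are split reductive group schemes over $\bZ$, hence over $\cO_K$ by base change. Their special fibres $G_k$ are $\GL_{n+1,k}$, $\SL_{n+1,k}$ and $\PGL_{n+1,k}$, which are again reductive. This gives the residual reductivity hypothesis.

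\emph{The action.} Conjugation acts on tuples of degree-$d$ homogeneous forms in $n+1$ variables by
\[
\varphi\mapsto A\circ\varphi\circ A^{-1}
\]
(using the adjugate of $A$ in the $\GL$ and $\SL$ cases). This is a linear representation, defined over $\bZ$, on the free $\bZ$-module of coefficient vectors of rank $N_{n,d}$. It therefore induces an $\cO_K$-action on $\bP^{N-1}$ together with a canonical $G$-linearization of $\cO(1)$.

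The one point that needs care is $\PGL_{n+1}$, which does not act linearly on the affine space. Here I would either pass to a suitable power of $\cO(1)$, or use the linearization given in Section~\ref{sec:git}. The locus $\MIL_x$ is independent of $m$ by Lemma~\ref{th:const}, and $\RSS_x$ depends only on $\cL^m$-semistability, which is the same for every $m$. So replacing $\cL$ by a power changes neither side. I expect this linearization issue to be the main obstacle, though it is still routine.

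\emph{Conclusion.} Theorem~\ref{th:genmininvloc} now gives $\MIL_\varphi=\RSS_\varphi$ for every $\varphi\in X_{n,d}(K)$.

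\emph{The ``in particular'' clause.} Let $\varphi\in\Hom_d(\bP^n)(K)$. The Macaulay resultant $\Res$ is a $G$-invariant section of some power $\cL^{m}$ with integral coefficients, so $\Res\in I_m$, and $\Res(\varphi)\neq0$ because $\varphi$ is a morphism. By Lemma~\ref{th:const} and Remark~\ref{th:eval}, the minimum locus of $\ord\Res_\varphi$ is $\MIL_\varphi$, and by definition this locus is $\MRL_\varphi$. Combining with the main equality gives $\MRL_\varphi=\RSS_\varphi$.
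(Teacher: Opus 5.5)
Your proposal follows the paper's route: the paper gives no separate argument and states the corollary as an immediate consequence of Theorem~\ref{th:genmininvloc}. Your checks (reductivity of $G_k$, passing to a power of $\cL$ for $\PGL_{n+1}$ via Lemma~\ref{th:const}, and $\Res\in I_m$ with $\Res(\varphi)\neq 0$) make explicit what the paper leaves implicit.

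One caveat, which the paper also glosses over, concerns $\GL_{n+1}$ with the untwisted conjugation linearization. There the central $\bG_m$ acts on the coefficient space through a nontrivial character (e.g.\ $c\mapsto c^{1-d}$ for $A\circ\varphi\circ A^{-1}$). Consequently $I_m=0$ for all $m\ge 1$, and $\Res$ is only semi-invariant, changing by a power of $\det$. So in that case you must first twist the linearization on a suitable power of $\cL$ by a power of $\det$ before $\Res\in I_m$, and hence $\MRL_\varphi=\MIL_\varphi$, holds.
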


The following non-emptiness of $\MIL_x$, so of $\RSS_x$, is also one of our principal results. 
For technical reasons, 
we introduce some $\real$-building $\BT_G^{\real}(K)$ densely including $\BT_G(K)$ 
(see Subsection \ref{sec:building} for the details), which is also used in the proof of Theorem \ref{th:genmininvloc}.

\begin{mainthm}\label{th:existence}
In the setting of Theorem \ref{th:genmininvloc},
suppose in addition that
the $\bR$-building $\BT_G^{\real}(K)$ is metrically complete. Then for any $x\in X^{\st}(\cL)$, we have 
\begin{gather}
    \MIL_x\neq \emptyset. 
\end{gather}
\end{mainthm}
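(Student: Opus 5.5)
We will prove Theorem \ref{th:existence} by extending the order function $\ord\rho_x$ to the $\bR$-building $\BT_G^{\real}(K)$, showing it attains its minimum there, and then using Theorem \ref{th:genmininvloc} (or its proof) to bring a minimizer back to $\BT_G(K)$.

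\textbf{Step 1: extension and convexity.} Fix $m\gg1$ and $\rho\in\invs$ with $\rho(x)\neq0$; such $\rho$ exists because $x\in X^{\st}(\cL)$ is in particular semistable. By \eqref{eq:invord}, $\ord\rho_x([g])$ equals $\ord(\rho(x))$ minus the minimum of $\ord$ over the evaluations at $x$ of the lattice $\sigma_*(g)L_m$. Equivalently, it is a norm-type function $-\log\|{\rm ev}_x\|_{g}$ on the building attached to the lattice $g L_m$. Through the Goldman--Iwahori/Bruhat--Tits description of $\BT^{\real}_{\GL(V_m)}$ as a space of norms, this function extends continuously to $\BT_G^{\real}(K)$. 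On each apartment, i.e.\ for $g=t$ in a split maximal torus, it is a maximum of finitely many affine functions of the point in $X_*(T)\otimes\bR$, one for each weight occurring in $x$. Hence it is convex and piecewise affine along geodesics, and it is $1$-Lipschitz up to a constant depending on $m$.

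\textbf{Step 2: properness from stability.} This is the key step and the main obstacle. Stability of $x$ means that for every nontrivial one-parameter subgroup $\lambda$ of $G_K$ the Hilbert--Mumford weight $\mu(x,\lambda)$ is positive. Combined with the usual compactness of the space of directions, this gives a uniform constant $c>0$ with $\mu(x,\lambda)\ge c\|\lambda\|$, the so-called Kempf--Ness/Hesselink uniformity. On the apartment through a base point $o$, the function then grows along every geodesic ray at least like $c\cdot\mathrm{dist}(o,\cdot)$. The difficulty is that the Weyl-group and $G(\cO_K)$-translates of apartments must all be handled at once, with the constant uniform in the conjugating element $k\in G(\cO_K)$. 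For this we will use the Cartan decomposition $G(K)=G(\cO_K)T(K)G(\cO_K)$ and the fact that $G(\cO_K)x$ has bounded image. Concretely, we will bound the relevant weight data using the finitely many weight-sets $\{\chi:\ \chi\text{ appears in }k\cdot x\}$. These sets vary with $k$, but each still satisfies the numerical criterion, so the constant can be taken as a minimum over finitely many subsets of the weights of $V_m$.

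\textbf{Step 3: existence of a minimizer.} Sublevel sets of the extended function are therefore bounded, closed and convex in the CAT(0) space $\BT_G^{\real}(K)$. Because $\BT_G^{\real}(K)$ is metrically complete, we take a minimizing sequence. By convexity and CAT(0) geometry, the nested bounded closed convex sublevel sets have nonempty intersection, via the standard midpoint/Bruhat--Tits argument that minimizing sequences for convex functions are Cauchy toward the circumcenter. This gives a minimizer $p\in\BT_G^{\real}(K)$. Completeness is used exactly at this point; without spherical completeness the sequence could escape.

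\textbf{Step 4: descent to $\BT_G(K)$.} Since $K$ is algebraically closed, the value group is divisible. The function is piecewise affine with rational slopes relative to $\ord(K^\times)$, so its minimum set is a nonempty polyhedral set in $\BT_G^{\real}(K)$ cut out by walls defined over $\ord(K^\times)$. Such a set contains a vertex, and that vertex is a point of $\BT_G(K)$. Alternatively, we may invoke the argument of Theorem \ref{th:genmininvloc}: the minimum over $\BT_G^{\real}(K)$ is characterized by semistable reduction, and any point near $p$ in the dense subset $\BT_G(K)$ lying in the same face attains the same value. Hence $\MIL_x\neq\emptyset$.
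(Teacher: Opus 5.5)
Your Steps 1, 3 and 4 follow the paper's outline:
\begin{itemize}
\item the convex extension $\hat\delta_x$ of the order function;
\item existence of a minimizer on the complete $\operatorname{CAT}(0)$-space via Theorem~\ref{th:convexmin};
\item descent to a $\Gamma$-rational point of the minimum locus on an apartment via Lemma~\ref{lemm:dap}.
\end{itemize}
Step~2, which is the content of the paper's Lemma~\ref{lemm:sublevelbdd}, has a genuine gap.

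Every point of the building lies on an apartment $f_{\Lambda,k}$ with $f_{\Lambda,k}(0)=[e]$ and $k\in G(\cO_K)$. With the normalization \eqref{eq:normalize}, on that apartment
\[
\hat\delta_x\bigl(f_{\Lambda,k}(v)\bigr)=\max_i\Bigl(\langle w_i,-v\rangle-\ord\bigl(b_i(k\cdot x)\bigr)\Bigr).
\]
Your finitely-many-supports argument gives a uniform $c>0$ with $\max\{\langle w_i,-v\rangle : b_i(k\cdot x)\neq 0\}\ge c\|v\|$. That is only a uniform \emph{asymptotic slope}. The lower bound it actually yields is $c\|v\|-\max\{\ord(b_i(k\cdot x)) : b_i(k\cdot x)\neq 0\}$, and these offsets are not bounded as $k$ ranges over $G(\cO_K)$.

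For example, take $\SL_2$ acting on binary cubics and $x=XY(X-Y)$. An element of $\SL_2(\cO_K)$ fixing $1,\infty$ and moving the root $0$ to $\epsilon\in\mathfrak{m}_K\setminus\{0\}$ produces a $Y^3$-coefficient of order $\ord\epsilon$.

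The boundedness of $G(\cO_K)\cdot x$ only gives $\ord(b_i(k\cdot x))\ge 0$, which is the wrong direction. Since the residue field is infinite, neither $G(\cO_K)$ nor the space of directions at $[e]$ is compact. So a convex function with slope $\ge c$ along every ray from $[e]$ can still have unbounded sublevel sets, if the point where that slope takes over drifts to infinity from ray to ray.

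What is missing is the \emph{orbit-closedness} part of stability, not just the numerical criterion, combined with the completeness of $\cO_K$ as a substitute for compactness. The paper argues by contradiction:
\begin{itemize}
\item Take $a_n\to\infty$ with $\hat\delta_x(a_n)\le 0$, lying on apartments $f_{\Lambda,\gamma_n}$ with $[\gamma_n]=[e]$ and normalized directions $u_n\to u_0$.
\item By convexity, the midpoints $h_n$ satisfy $\sigma_*^\vee(h_n)\cdot\tilde x\in(L_m)^\vee$.
\item The coordinates $b_i(h_n\cdot x)$ with $\langle w_i,u_0\rangle\neq 0$ tend to $0$.
\item Since $Z=\sigma_*^\vee(G)\cdot\tilde x$ is Zariski closed (Remark~\ref{rem:actionlift}), Lemma~\ref{lem:complation} yields $g\in G(K)$ with $\sigma_*^\vee(g)\cdot\tilde x\in Z\cap M^{u_0}$.
\item Then $\Delta(\Lambda,g\cdot x)\subset(\bR u_0)^\perp$ has empty interior, contradicting the stability of $g\cdot x$.
\end{itemize}

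Within your framework, the statement you actually need is: there is $u\in\bR$ with $0\in\Int\conv\{w_i:\ord(b_i(k\cdot x))\le u\}$ for all $k\in G(\cO_K)$. This gives $\hat\delta_x\ge c\, d(\cdot,[e])-u$. Proving it requires exactly this closed-orbit-plus-completeness limit argument.

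A minor point on Step~3: minimizing sequences of a convex function need not be Cauchy. What holds in a complete $\operatorname{CAT}(0)$-space is that nested nonempty bounded closed convex sets have nonempty intersection. Here these are the sublevel sets of the continuous function $\hat\delta_x$, and this is what Theorem~\ref{th:convexmin} encodes.
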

The completeness of $\BT_G^{\real}(K)$ might also have its own interest.
In \cite{MSSS2013MetCompBTBldg}, this has been obtained when $K$ is spherically complete and a kind of boundary structure of $\BT_G(K)$,
the Moufang polygon, falls into some types. For example, in the dynamical setting, in $n=1$ (and $G=\PGL_2$, $\GL_2$, or $\SL_2$) case, $\BT_G^{\real}(K)$ is complete (remarkably, in this $n=1$ dynamical case, the spherical completeness assumption is known to be redundant \cite{Rumely17}).
We postpone in treating more closely this completeness matter to our forthcoming paper.

For the related works to Theorems \ref{th:genmininvloc} and \ref{th:existence} in the setting of discrete valued or universally Japanese rings, see \cite{Levy2012SemistableReduction} and \cite[Chapters 3 and 4]{Maculan2017DiophantineAppliOfGIT} (after \cite{Burnol92}); at least in such cases, the completeness assumption of $\BT_G^{\real}(K)$ in Theorem \ref{th:existence} could be omitted.

\subsection{Organization of the paper}
In Section \ref{sec:background}, we prepare some details on GIT and buildings, and in Section \ref{sec:wpandordfcn}, we study some properties of the order functions on $\BT_G(K)$. In Sections \ref{sec:coincide} and \ref{sec:SphComp}, we show Theorems \ref{th:genmininvloc}
and \ref{th:existence}, respectively. In Section \ref{sec:apply}, we give application of Theorem \ref{th:genmininvloc} and computation of $\MIL_\varphi$, in the dynamical setting in higher dimensions.

\section{Background on GIT and buildings}\label{sec:background}
We denote by $e=e_G$ the unit in a group $G$,
$\conv$ means the convex hull, and $\Int S$ means the interior of a subset $S$ in $\bR^q$. 

Let us fix a base ring $R$ which is a subring of a field $F$. Let $G=(G,\mu)$ be an algebraic group (over $R$) and
$\alpha : G \times X \to X$ the algebraic action of $G$
on an algebraic scheme $X$ (that is, $X$ is a $G$-scheme over $R$). 

\subsection{Geometric invariant theory}\label{sec:git}
Let $\cL$ be a line bundle on $X$ which is $G$-linearized by a (linearization) sheaf isomorphism 
\begin{gather*}
 \sigma:\alpha^* \cL\to \bigl(\text{the projection }p_2:G \times X\to X\bigr)^*\cL 
\end{gather*}
in that the cocycle condition 
\begin{gather*}
 (\mu \times \id_X )^* \sigma = \bigl((\text{the projection }p_{23}:G\times (G\times X)\to(G\times X))^* \sigma \bigr) \circ \bigl((\id_G \times \alpha )^* \sigma\bigr)
\end{gather*}
holds on $G\times G\times X$. 
Then by duality, there is a morphism 
\begin{gather*}
 \sigma^*:H^0(X , \cL ) \xrightarrow{\alpha^*} H^0(G \times X , \alpha^* \cL ) \xrightarrow{\sigma} H^0(G\times X, p_2^* \cL) \simeq H^0(G , \cO_G) \otimes H^0(X, \cL)
\end{gather*}
of $R$-modules, which in turn induces a representation
\begin{gather*}
 \sigma_*:G\ni g\mapsto\Bigl(\sigma_*(g) :  H^0(X,\cL ) \xrightarrow{\sigma^*} H^0(G,\oring_G) \otimes H^0(X,\cL ) 
\xrightarrow{\ev_g \otimes \id } F\otimes_R H^0(X,\cL)\Bigr)
\end{gather*}
on $F\otimes_RH^0(X,\cL)$ of $G$. 
We also note that for every $m\in\bN$, 
the $m$th power $\cL^m$ of $\cL$ is still
$G$-linearized by a
linearization sheaf isomorphism 
canonically induced by $\sigma$.

Suppose here that $R=F$, $G$ is reductive, 
$X$ is projective, and $\cL$ is ample. Then
we say a geometric point $x \in X$ is \emph{semistable} under $G$ if,
replacing $\cL$ with $\cL^m$ for some $m\in\bN$ if necessary, there is a $\sigma_{*}(G)$-invariant 
section $s \in H^0(X,\cL)$ which does not vanish at $x$
(\cite[Definition 1.7 and Amplification 1.11]{git1994});
in our situation, no affineness of the zero locus of $s$
is needed. The {\em semistable locus $X^{\sst}(\cL)$ under $G$} is a (Zariski) open subscheme of $X$ such that
the equality
\begin{gather*}
 \bigl(X^{\sst}(\cL)\bigr)_{\overline{F}}\bigl(\ol{F}\bigr) 
= \bigl\{ z \in X\bigl(\ol{F}\bigr): z \text{ is semistable under } G\bigr\}
\end{gather*}
holds. We also say a geometric point $x \in X^{\sst}(\cL)$ is \emph{stable} under $G$ if 
\begin{itemize}
 \item  the $G$-orbit 
 $G \cdot x:=\alpha(G,x)$ of $x$ is (Zariski) closed in the above
 $(X^{\sst}(\cL))_{\ol{F}}(\ol{F})$ and 
 \item the dimension of the stabilizer
 $\Stab(G,y)$ in $G$ of a geometric point $y\in X^{\sst}(\cL)$ near $x$
 is locally minimal at $x$. 
\end{itemize}
The {\em stable locus $X^{\st}(\cL)$ under $G$} is a (Zariski) open subscheme of $X^{\sst}(\cL)$ such that the equality 
\begin{gather*}
    \bigl(X^{\st}(\cL)\bigr)_{\ol{F}}\bigl(\ol{F}\bigr) = \bigl\{z\in X\bigl(\ol{F}\bigr):z\text{ is stable under }G\bigr\}
\end{gather*}
holds.

\begin{rem}\label{rem:actionlift}
The natural action of $G$ on the affine cone $\widetilde{X} \subset H^0(X,\lshf)^\vee$ of $X$ is that induced by the dual representation $\sigma_*^\vee$. In terms of  affine cones, a point $x \in X^{\sst}(\cL)$ is stable under $G$ if and only if $\sigma_*^\vee (G) \cdot \widetilde{x}$ is closed in $\widetilde{X}$ for any lifting $\widetilde{x}$ 
 to $\widetilde{X}$ of $x$.
\end{rem}

The categorical quotient $X^{\sst}(\cL )\sslash G$ exists, and
if there is no surjective group morphism $G \to \gmult$, then 
moreover $X^{\sst}(\cL )\sslash G  \simeq \Proj\bigoplus_{m\in\bN\cup\{0\}}H^0(X, \cL^m)$. In particular, all the $X^{\sst}(\cL)$, $X^{\st}(\cL)$, and $X^{\sst}(\cL )\sslash G$ are determined up to taking a finite power of $\cL$ (\cite[between Amplification 1.11 and Converse 1.12]{git1994});
this kind of uniqueness is not the case without any kind of assumption on the non-existence of a surjective group morphism $G \to \gmult$. 

\subsection{Weight polytope and Hilbert-Mumford criterion}\label{sec:wp}

Suppose here that $X$ is flat and projective and that $G$ is reductive. Let $\cL=(\cL,\sigma)$ be a $G$-linearized very ample line bundle on $X$ and set 
\begin{gather*}
 N_\lshf := \rank_R \bigl(H^0(X,\lshf )\bigr).
\end{gather*}
For every group scheme morphism $\Lambda : \bG_m^q  \to G$, $q \in \bN$, a section $s \in H^0(X,\lshf )$ is said to be of (pure) weight $w = (w_1,\ldots , w_q) \in \zahl^q$ if 
\[ 
\Lambda^*\bigl(  \sigma^* (s) \bigr)
 =\biggl(\prod_{j=1}^q t_j^{w_{j}}\biggr) s,
\quad \bG_m^q \simeq \Spec(R [t_1,\ldots , t_q]); 
\]
there is a basis $(b_i=b_i^{(\lshf ,\Lambda)})_{i=1}^{N_{\lshf}}$ of $H^0(X,\lshf )$ such that for each $i$, the section $b_i$ is of (pure) weight $w_i=(w_{i1},\ldots,w_{iq})\in\bZ^q$, 
and then for any base change to an $R$-algebra $S$ we have 
\begin{gather}
 \bigl(\sigma_*(\Lambda(t))\bigr)\cdot b_i 
 =\biggl(\prod_{j=1}^q t_j^{w_{ij}}\biggr) b_i,
\quad t=(t_1,\ldots, t_q )\in\bG_m^q(S) (\simeq (S^\times )^{q}  ), \label{eq:weitdecomp}
\end{gather}
and call such a basis $(b_i)$ a weight decomposition of $H^0(X,\cL)$ having the weights $(w_i)$.
To every point $x\in X$, the {\em weight polytope} 
\begin{align}
\Delta (x,\Lambda) :=& \conv \left( \{ w\in\bZ^q : \text{for some } s \in H^0(X,\lshf )
\text{ of weight }w, s(x) \neq 0 \} \right) \notag \\
=& \conv \bigl( \{ w_i \in \bZ^{q} : b_i (x) \neq 0 \}\bigr) \subset \real^q\label{eq:wpdecomp}
\end{align}
is associated. For more details, see \cite[Exp.VIII]{SGA3vol3}. 

It is remarkable that in seeing whether $x\in X$ is (semi)stable under $G$ or not, we do not need to know concretely what those $\sigma_*(G)$-invariant sections $s\in H^0(X,\cL)$ are. This remarkable fact is formulated as follows in terms of weight polytopes. For more details, see, e.g., \cite[Chapter 9]{LecInvTheory2003Dolgachev}.

\begin{thm}[Hilbert-Mumford criterion {\cite{git1994}}]\label{th:hmcrit}
 Let $G$ be an algebraic group, $X$ a projective $G$-scheme, and $\cL = (\cL,\sigma )$ 
a $G$-linearized very ample line bundle on $X$, over a base field $F$. Then for any geometric point $x \in X$, the following are equivalent;
\begin{enumerate}[(a)]
    \item \label{item:ss} $x\in X^{\sst}(\cL)$ (resp.\ $x\in X^{\st}(\cL)$).
    \item \label{item:nonnull}
For any injective group scheme morphism $\Lambda : \bG_m^q \to G$, we have $0 \in \Delta (x, \Lambda )$ (resp. $0 \in \Int\Delta (x, \Lambda )$).
\end{enumerate}
\end{thm}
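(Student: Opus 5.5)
This is the classical Hilbert--Mumford criterion, which we take from \cite{git1994}. The plan reduces it to the one-parameter-subgroup version ($q=1$) and then relates the general weight polytope $\Delta(x,\Lambda)$ to the one-dimensional ones. We work over an algebraically closed field, passing to $\ol{F}$, since (semi)stability is defined via geometric points.

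\textbf{Reduction to $q=1$.} Given an injective $\Lambda:\bG_m^q\to G$ and a cocharacter $\lambda_a(t)=(t^{a_1},\dots,t^{a_q})$ with $a\in\bZ^q$, the composite $\Lambda\circ\lambda_a$ acts on $b_i$ with weight $\langle a,w_i\rangle$. Hence $\Delta(x,\Lambda\circ\lambda_a)$ is the projection of $\Delta(x,\Lambda)$ under $\langle a,\cdot\rangle$. Since $\Delta(x,\Lambda)$ is a rational polytope, $0\in\Delta(x,\Lambda)$ holds if and only if $\min_i\langle a,w_i\rangle\le 0\le\max_i\langle a,w_i\rangle$ for all integral $a$. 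This follows from rational separating hyperplanes, i.e.\ Farkas' lemma over $\bQ$. Similarly, $0\in\Int\Delta(x,\Lambda)$ holds if and only if the strict inequalities hold for all $a\neq 0$. So condition (b) for all $q$ is equivalent to condition (b) for $q=1$ and all nontrivial one-parameter subgroups $\lambda$. In that case $0\in\Delta(x,\lambda)$ means $\mu(x,\lambda)\ge 0$ in Mumford's numerical function, and the interior condition means $\mu(x,\lambda)>0$.

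\textbf{(a)$\Rightarrow$(b).} Take an invariant section $s$ of some $\cL^m$ with $s(x)\ne0$. Lift $x$ to $\widetilde x$ in the affine cone and decompose $\widetilde x=\sum_i c_i b_i^\vee$ along $\lambda$. Suppose all weights of the $b_i$ not vanishing at $x$ are positive. Then $\lambda(t)\cdot\widetilde x\to 0$ as $t\to0$ (with the appropriate sign convention), which contradicts $s(\lambda(t)\widetilde x)=s(\widetilde x)\ne0$ together with $s(0)=0$ by homogeneity. The opposite sign is handled by $t\to\infty$. Thus $0\in\Delta$. For stability, use Remark~\ref{rem:actionlift}: if $0$ lies on the boundary, choose $\lambda$ with all weights $\ge0$. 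Then $\lim_{t\to 0}\lambda(t)\widetilde x$ exists in the cone. Either it differs from $\widetilde x$, so the orbit is not closed, or $\lambda$ fixes $\widetilde x$, so the stabilizer is positive-dimensional. Both contradict stability.

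\textbf{(b)$\Rightarrow$(a).} This is the main obstacle. If $x$ is unstable, then $0\in\ol{G\cdot\widetilde x}$ in the affine cone, because invariants separate disjoint closed invariant sets for reductive $G$ (a consequence of geometric reductivity). The key step is the fundamental lemma of Mumford, Iwahori and Kempf: if a point lies in the closure of a $G$-orbit, it is reached as $\lim_{t\to0}\lambda(t)\cdot\widetilde x$ for some one-parameter subgroup $\lambda$. We would prove this via the Cartan/Iwahori decomposition $G(\ol F((t)))=G(\ol F[[t]])\,T(\ol F((t)))\,G(\ol F[[t]])$. First pick a curve $g(t)\in G(\ol F((t)))$ with $g(t)\widetilde x\to0$, and decompose it. The torus part gives a cocharacter along which all weights appearing in $\widetilde x$ are positive. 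This contradicts $0\in\Delta(x,\lambda)$.

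The stable case is analogous. If the orbit of $\widetilde x$ is not closed, apply the same lemma to a closed orbit in the boundary to get $\lambda$ with a limit, hence all weights $\ge0$, so $0\notin\Int\Delta$. If the stabilizer is positive-dimensional, use that the stabilizer of a point with closed orbit is reductive (Matsushima), so it contains a nontrivial torus. A one-parameter subgroup of that torus fixes $\widetilde x$, so all its weights vanish and the interior is empty.
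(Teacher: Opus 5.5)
The paper gives no proof of this statement; it quotes it from Mumford's GIT. Your outline is essentially Mumford's proof: geometric reductivity gives $0\in\ol{G\cdot\widetilde x}$ for unstable $x$, and Iwahori's decomposition of $G(\ol{F}((t)))$ then extracts a one-parameter subgroup. Your preliminary reduction from $\bG_m^q$ to one-parameter subgroups, via rational separating or supporting hyperplanes of the rational polytope $\Delta(x,\Lambda)$ plus the lower-dimensional case, is correct. It is exactly what turns the classical criterion into the $\bG_m^q$-form used here.

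You also rightly read $G$ as reductive and ``stable'' as closed orbit with \emph{finite} stabilizer. With the paper's looser wording (``locally minimal'' stabilizer dimension, or Remark~\ref{rem:actionlift}) the stable half would fail. For example, take $G=\bG_m$ acting trivially on $X$ and $\cL$: every point is stable in that sense, yet $\Delta(x,\id)=\{0\}$ has empty interior.

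There is a small point in the semistable part of (a)$\Rightarrow$(b). An invariant $s\in H^0(X,\cL^m)$ is a regular function on the affine cone only if it lies in the image of $\mathrm{Sym}^m H^0(X,\cL)$. So ``$s(0)=0$'' needs either that, or Mumford's argument with the $\lambda$-fixed point $\lim_{t\to0}\lambda(t)\cdot x\in X$ and the weight of $\lambda$ on the fibre of $\cL$ there.

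The stable half has genuine gaps, though all are repairable.
\begin{enumerate}
\item In (a)$\Rightarrow$(b), a limit $y=\lim_{t\to0}\lambda(t)\cdot\widetilde x$ different from $\widetilde x$ may still lie in the closed orbit $G\cdot\widetilde x$. For $\SL_2$ on binary quadratics in $u,v$, take $\widetilde x=uv+v^2$ and the diagonal $\lambda$ giving $v^2$ weight $2$; the limit is $y=uv$, which is an $\SL_2$-translate of $\widetilde x$. The correct dichotomy is $y\notin G\cdot\widetilde x$ (orbit not closed) versus $y=g\cdot\widetilde x$. In the second case $y$ is $\lambda$-fixed, so $g^{-1}\lambda g$ fixes $\widetilde x$.
\item Your ``fundamental lemma'' is false as stated. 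For $\bG_m^2$ acting diagonally on $\ol{F}^2$ with $\widetilde x=(1,1)$, the point $(0,c)$ with $c\notin\{0,1\}$ lies in the orbit closure but is not a one-parameter limit. What is true (Kempf) is that every closed $G$-stable set meeting $\ol{G\cdot\widetilde x}$ contains some one-parameter limit of $\widetilde x$. That covers both of your uses.
\item Your Cartan sketch proves only the case of target $0$, and only after replacing $\lambda$ by $h_2(0)^{-1}\lambda h_2(0)$. The reason is that $\lim\lambda(t)h_2(t)\cdot\widetilde x$ can differ from $\lim\lambda(t)h_2(0)\cdot\widetilde x$ through higher-order terms in negative-weight coordinates.
\item Matsushima controls $\Stab(\widetilde x)$, while stability concerns $\Stab(x)$. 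The latter may be one dimension larger, via a torus scaling $\widetilde x$. This case is harmless, since then $\Delta(x,\lambda)$ is a single point, but it must be addressed.
\end{enumerate}

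Mumford's formulation repairs all of this at once: $x$ is stable iff the orbit map $g\mapsto g\cdot\widetilde x$ is proper. If it is not, the valuative criterion gives $g(t)\in G(\ol{F}((t)))\setminus G(\ol{F}[[t]])$ with $g(t)\cdot\widetilde x$ integral. Writing $g=h_1\lambda h_2$ gives a nontrivial $\lambda$ whose weights on $h_2(0)\cdot\widetilde x$ are all nonnegative, i.e.\ $0\notin\Int\Delta\bigl(x,h_2(0)^{-1}\lambda h_2(0)\bigr)$. This handles non-closed orbits and positive-dimensional stabilizers together and makes Matsushima unnecessary.
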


\subsection{Maximal tori, lifting, and
the building associated to the translation space}\label{sec:building}

To fix notations, we recall the definition of an Euclidean building. The notion of convexity in affine spaces is defined as usual.

\begin{definition}[{\cite{WeissStrAffBldgs2009}, \cite{Struyve2011CpltRbldgAndFixptThm}}]\label{def:rbuild} 
Let $V$ be an $\real$-linear space and $A_0$ a $V$-affine space (i.e. a set with free transitive $(V, +)$-action), and set $O(A_0) := O(V) \ltimes (V,+)$ (i.e., the orthogonal affine transformation group on $A_0$). 

A subgroup $W$ of $O(A_0)$ is {\em an affine Weyl group
on $A_0$} if $W = \ol{W} \ltimes T_W$ for some finite subgroup $\ol{W}$ of $O(V)$ and 
some subgroup $T_W$ in $(V,+)$. A Weyl chamber of $\ol{W}$ is the closure of a connected component of 
the complement in $V$ of
\begin{multline*}
 \bigl\{ v\in V :  v \in V^{\sigma}\text{ for some }\sigma \in \ol{W} \text{ such that }\sigma^2 = e\text{ and } \dim (V^{\sigma}) = \dim V -1 \bigr\},\\
\text{where }V^\sigma = \{ v\in V : \sigma (v) = v \}.
\end{multline*}
A Weyl chamber on $A_0$ is any 
$(V,+)$-translation of a Weyl chamber of $\ol{W}$.

For an affine Weyl group $W$ on $A_0$, \emph{a Euclidean Building of type }$(A_0,W)$ is a tuple
\begin{gather*}
 \bigl(\text{a metric space }(B,d), \mathcal{A} = \{ f_i : A_0 \to B\}_{i \in I},
\{ r_{x,i} : B \to f_i(A_0) \}_{i \in I , x \in A_0}\bigr) 
\end{gather*}
satisfying the following axioms:
\begin{enumerate}[(i)]
    \item For any $i\in I$, $f_i: A_0 \to B$ is injective; both $f_{i}$ and the image $f_i(A_0)$ are called an apartment of $B$.
    \item\label{item:join} For any $x,y \in B$, there exists $i \in I$ such that $\{x,y\}\subset f_{i}(A_0)$.
    \item For any $f \in \mathcal{A}$ and any $w \in W$, $f \circ w \in \mathcal{A}$.
    \item \label{apcommon} For any $f,f' \in \mathcal{A}$, the set $C_{f,f'} := f^{-1}(f(A_0) \cap f'(A_0))$ is convex in the $V$-affine space $A_0$ and we have
$( (f')^{-1} \circ f) |_{C_{f,f'}} = w |_{C_{f,f'}}$ for some $w \in W$.
    \item For any $f \in \mathcal{A}$ and any Weyl chamber $W$ on $A_0$, there exists $f' \in \mathcal{A}$ such that $W \subset C_{f,f'}$.
    \item Each map $r_{x,i} : B \to f_i(A_0)$ is the nearest point retraction, which is 1-Lipschitz with respect to $d$. 
\end{enumerate}
\end{definition}

Shall we come back to our situation in Section \ref{sec:intro} (so $R=\cO_K$ and $F=K$). 
Since $\overline{K}=K$ (so $\overline{k}=k$),
$\cO_K$ is the only \'{e}tale extension of $\oring_K$, 
so we introduce herewith tori of $G$, $G_K$ and $G_k$
of only split type as follows. 
Recall that for any reductive group $H$ over a base ring $R$, a (split) {\em torus} of $H$ is an injective morphism $\Lambda : \bG_m^q \to H$ of $R$-group schemes for some integer $q \geq 1$, and a (split) {\em maximal torus} is a torus whose image is maximal among the images of tori. In our situation, for $H = G$, $G_K$, or $G_k$, the dimension of a maximal torus $T$ of $H$ is independent of the choice of $T$, and is called the rank $r(H)$ of $H$. Moreover, by \cite[Exp.\ XIX, Corollaire 2.6]{SGA3vol3}, the ranks of $G$, $G_K$, and $G_k$ are identical, and we set
\begin{gather*}
    r:=r(G)=r(G_K)=r(G_k)
\end{gather*}
here and below.

The following lifting property of maximal tori of $G$, which we state as a lemma, is crucial; see \cite[Exp.\ XII, Th\'{e}or\`{e}me 1.7]{SGA3vol2} for the details.

\begin{lemma}\label{lemm:toruslift}
Any maximal torus 
$\ol{\Lambda }: (\bG_m^r)_k  \to G_k$ lifts to a maximal torus $\Lambda : \bG_m^r \to G$.
\end{lemma}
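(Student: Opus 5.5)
The plan is to reduce to the classical infinitesimal lifting statement for smooth group schemes, in the form of \cite[Exp.\ XII, Th\'{e}or\`{e}me 1.7]{SGA3vol2}, and then pass from formal lifts to an honest lift over $\oring_K$. Throughout, $G$ is smooth and affine over $\oring_K$. The functor of maximal tori of $G$ (equivalently, of subgroups of multiplicative type of maximal rank) is representable by a smooth $\oring_K$-scheme $\mathcal{T}$, namely the quotient $G/N_G(T_0)$ for a maximal torus $T_0$, which exists locally for the \'etale topology. A maximal torus $\ol{\Lambda}$ of $G_k$ is a $k$-point of $\mathcal{T}_k$, and a lift is an $\oring_K$-point of $\mathcal{T}$ reducing to it.

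First I would fix a split maximal torus $T_0\subset G$. This exists since $G$ is reductive over the local ring $\oring_K$, whose only \'etale extension is itself, so $G$ is split by \cite[Exp.\ XXII]{SGA3vol3}. By the conjugacy of maximal tori over the algebraically closed field $k$, there is $\bar g\in G_k(k)$ with
\begin{gather*}
 \ol{\Lambda}=\operatorname{Int}(\bar g)\circ (T_0)_k\circ \bar\theta
\end{gather*}
for some automorphism $\bar\theta$ of $(\bG_m^r)_k$. Such a $\bar\theta$ lies in $\GL_r(\bZ)$, so it lifts trivially to an automorphism $\theta$ of $\bG_m^r$ over $\oring_K$.

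The key step is lifting $\bar g$ to $g\in G(\oring_K)$. Since $G$ is smooth over the henselian local ring $\oring_K$ (complete, rank-one valued), Hensel's lemma for smooth schemes gives surjectivity of $G(\oring_K)\to G_k(k)$. Then
\begin{gather*}
 \Lambda:=\operatorname{Int}(g)\circ T_0\circ\theta
\end{gather*}
is a group scheme morphism $\bG_m^r\to G$ that reduces to $\ol{\Lambda}$. It is injective because $T_0$ is a closed immersion, and it is maximal because its rank $r=r(G)$ equals the common rank of $G$, $G_K$ and $G_k$.

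The main obstacle is justifying the henselian surjectivity $G(\oring_K)\to G_k(k)$ when $\oring_K$ is non-noetherian. If needed, I would argue directly: $G$ is smooth and affine, so locally it is \'etale over an affine space, and the rank-one complete valuation ring is henselian, which lets points lift. Alternatively, one can invoke SGA3's statement for the smooth scheme $\mathcal{T}$ together with henselianity.
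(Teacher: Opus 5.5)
Your argument is correct, but it takes a different route from the paper. The paper gives no argument beyond the reference to SGA3, Exp.~XII, Th\'eor\`eme~1.7. There one uses that the functor of maximal tori of the smooth affine group $G$ is represented by a smooth $\oring_K$-scheme $\mathcal{T}$, so the $k$-point $\ol{\Lambda}$ of $\mathcal{T}_k$ lifts to an $\oring_K$-point because $\oring_K$ is henselian. That is the framing of your first paragraph, but you never actually use $\mathcal{T}$.

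Instead you reduce everything to lifting a single point of $G$ itself, in three steps:
\begin{enumerate}
\item You take a split maximal torus $T_0\subset G$ over $\oring_K$. It exists because $\oring_K$ is strictly henselian, so an \'etale neighbourhood of the closed point over which $G$ splits (SGA3, Exp.~XXII) has a section.
\item You use conjugacy of maximal tori of $G_k$ over $k=\ol{k}$ to write $\ol{\Lambda}=\mathrm{Int}(\bar g)\circ(T_0)_k\circ\bar\theta$. Here $\bar\theta\in\GL_r(\bZ)$, since a homomorphism of algebraic $k$-groups with trivial kernel is a closed immersion.
\item You use surjectivity of $G(\oring_K)\to G_k(k)$ to lift $\bar g$.
\end{enumerate}
The last step is not a real obstacle. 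A smooth morphism is locally of finite presentation and locally \'etale over an affine space, so the henselian lifting of EGA~IV (18.5.17) needs no noetherian hypothesis, and a complete rank-one valuation ring is henselian.

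What your route gives is an explicit lift, visibly $G(\oring_K)$-conjugate to $T_0$ up to a reparametrization in $\GL_r(\bZ)$. What it costs is that it needs $k$ algebraically closed and a maximal torus of $G$ over all of $\oring_K$. In SGA3 the \'etale-local existence of such a torus is itself derived from the smoothness of the scheme of maximal tori, so your argument is not more elementary at the level of foundations. The cited route works over any henselian local base and for an arbitrary residue field.
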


Here and below,
\begin{gather}
    \Gamma := \ord (K^\times )\overset{\text{dense}}{\subset} \bR\label{eq:valgr}
\end{gather} 
is the valuation group of $K$. Using the $\bR$-linear space $\Gamma^r\subset\bR^r$, 
we can augment $\BT_G(K)$ by the following Euclidean building.
The Weyl group of the reductive group scheme $G_K$ over $K$ is the normalizer quotient 
$W_{G_K} = (N_{G_K}(T))/T$,
which is a finite subgroup of $O(\Gamma^r)$ and where $T$ is any image of maximal tori of $G_K$, which are mutually conjugate. 

\begin{thm}[{\cite{Tits1986Immubles}, see also \cite{WeissStrAffBldgs2009, Bacak2014ConvexAnalyAndOptimInHadamSp, RemyThuillierAmaury2015BTBldsAndAnaGeom}}]\label{rbuild}
Let $G$ be a reductive $\oring_K$-group. Then there exists a Euclidean building $\BT^\real_G(K)$ of type $(A_0,W)=(\real^r, W_{G_K} \ltimes \Gamma^r)$ such that
\begin{enumerate}[(i)] 
 \item \label{item:dense}
there is a $G(K)$-equivariant dense embedding
$\BT_G(K)\hookrightarrow \BT^\real_G(K)$.

\item \label{aparttor} There is a maximal torus $\Lambda : \gmult^{r} \to G$ such that for every apartment $f : \real^r \to \BT^\real_G(K)$, there is some $g \in G(K)$ such that for every $t=(t_1,\ldots,t_r)\in(K^\times)^r$, setting $v_t:=(\ord t_1,\ldots,\ord t_r)\in\Gamma^r$, 
\[ 
f(v_t) = [ \Lambda (t) \cdot g] \in \BT_G(K),
\]
so in particular that $f^{-1}(\BT_G(K)) = \Gamma^r$. Then we also denote $f$ by $f_{\Lambda,g}$. 
    \item The $\bR$-building $\BT^\real_G(K)$ is a metric $\operatorname{CAT}(0)$-space.
\end{enumerate}
\end{thm}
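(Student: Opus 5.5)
The plan is to realize $\BT^\real_G(K)$ through the Bruhat--Tits construction attached to a valued root datum. This datum comes from a split maximal torus of $G$ defined over $\cO_K$. Then $\BT_G(K)$ is identified with the $G(K)$-orbit of the hyperspecial point whose stabilizer is $G(\cO_K)$.

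\emph{Choosing the torus and the root datum.} Since $\cO_K$ has no nontrivial \'etale extensions, $G$ is split. Fix a maximal torus $\Lambda:\gmult^r\to G$ over $\cO_K$; one exists, for instance by lifting a maximal torus of $G_k$ via Lemma \ref{lemm:toruslift}. Let $T$ be its image, with roots $\Phi$ and root subgroups $U_a\simeq\mathbb{G}_a$ over $\cO_K$ given by a Chevalley system. For $\lambda\in\Gamma$ define the filtrations $U_{a,\lambda}=\{u_a(y):\ord y\ge\lambda\}$. The standard apartment is $A_0=\bR^r=X_*(T)\otimes\bR$, with origin $o$ the point fixed by $G(\cO_K)$. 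The group $N_{G_K}(T)(K)$ acts on $A_0$ through $W_{G_K}\ltimes\Gamma^r$, where $t\in T(K)=(K^\times)^r$ acts by translation by $v_t=(\ord t_1,\dots,\ord t_r)$. This datum satisfies Tits' axioms for a valued root datum. Tits' construction \cite{Tits1986Immubles} then gives
\[
\BT^\real_G(K):=G(K)\times A_0/\sim, \quad (g,v)\sim(h,w)\iff \exists\, n\in N(K):\ w=n\cdot v,\ g^{-1}hn\in P_v,
\]
where $P_v$ is the group generated by $T(\cO_K)$ and the $U_{a,-a(v)}$. The apartments are the maps $f_{g}:v\mapsto[(g,v)]$, precomposed with elements of $W$.

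\emph{Verifying the building axioms.} Axioms (i), (iii), (iv), and (v) of Definition \ref{def:rbuild} follow as in Bruhat--Tits from the Bruhat and Iwasawa decompositions relative to the filtrations. Axiom (ii) is the statement that any two points lie in a common apartment, and it follows from $G(K)=P_vN(K)P_w$. The metric is the Euclidean one on $A_0$, transported to each apartment. The retractions $r_{x,i}$ are built from chambers in the usual way and are $1$-Lipschitz.

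\emph{The embedding and item (ii).} We have $P_o=G(\cO_K)$, since $o$ is hyperspecial because $G$ is reductive over $\cO_K$. Define $\BT_G(K)\ni[g]\mapsto g^{-1}\cdot o$. This map is well defined and injective because $\Stab(o)=G(\cO_K)$. It is $G(K)$-equivariant for the right action on $\BT_G(K)$ composed with inversion. By the Cartan decomposition $G(K)=G(\cO_K)T(K)G(\cO_K)$, every point in the orbit $G(K)\cdot o$ lies on some apartment $g^{-1}\cdot A_0$. Inside $g^{-1}A_0$ the orbit points are exactly $g^{-1}\Lambda(t)^{-1}\cdot o$, with $t\in(K^\times)^r$. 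After renaming via the inversion and the $W$-action, this gives the formula $f(v_t)=[\Lambda(t)g]$ and the equality $f^{-1}(\BT_G(K))=\Gamma^r$. Because $\Gamma$ is dense in $\bR$, the set $\Gamma^r$ is dense in each apartment. Every point lies on some apartment, so the orbit is dense, which proves item (i).

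\emph{CAT(0), the main obstacle.} The real difficulty is item (iii), since $\Gamma$ is non-discrete and the building is not simplicial. I would follow Bruhat--Tits and Parreau. First show that the metric is well defined and geodesic: by axiom (ii), the distance between two points is their distance in any apartment containing both, and axiom (iv) makes this independent of the apartment. Then prove the Bruhat--Tits CN inequality. Fix a geodesic $[y,z]$ with midpoint $m$ and a point $x$. Retract onto an apartment containing $[y,z]$, centered at a chamber germ at $m$; the retraction is $1$-Lipschitz and is an isometry on segments through $m$. The inequality then reduces to the Euclidean parallelogram law in $A_0$. Geodesic spaces satisfying the CN inequality are CAT(0), with no completeness needed. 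The delicate step is the existence and $1$-Lipschitz property of these retractions at non-vertex points in the non-discrete case. For this I would invoke the axiomatic treatment in \cite{WeissStrAffBldgs2009} and \cite{Struyve2011CpltRbldgAndFixptThm} rather than reprove it.
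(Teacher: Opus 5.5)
Your proposal is correct and takes essentially the same route as the paper, which gives no argument of its own: it imports the result from Tits' construction of the extended Bruhat--Tits $\mathbb{R}$-building. That building comes from the valued root datum of a split maximal $\mathcal{O}_K$-torus, with $G(\mathcal{O}_K)$ as the stabilizer of the origin and the CAT(0) property obtained from the (CN) inequality via retractions. One small fix is needed: with $P_v$ generated by the $U_{a,-a(v)}$, the consistency condition $tP_vt^{-1}=P_{t\cdot v}$ forces $t\in T(K)$ to act by translation by $-v_t$ (not $+v_t$), and with that convention $g^{-1}\Lambda(t)^{-1}\cdot o=g^{-1}\cdot(o+v_t)$ yields $f(v_t)=[\Lambda(t)g]$ directly, with no renaming.
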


\section{Weight polytopes and the order functions}
\label{sec:wpandordfcn}
We denote by $\langle \cdot , \cdot \rangle$
the standard inner products on the Euclidean spaces $\real^q$, and for every $S\subset\bR^q$, set $-S:=\{-v\in\bR^q:q\in S\}$. For a function $c:X \to\bR\cup\{+\infty\}$ on a set $X$ and for any $u\in\bR$, set the sublevel set 
\begin{gather*}
 L_{c,\leq u} :=c^{-1}\bigl( (-\infty , u] \bigr)
\end{gather*}
here and below.

\subsection{Convex function}
To fix notations, we recall some facts 
on convex functions. 

For a continuous function $c : \real^q \to (-\infty,+\infty ]$, the {\em (convex) conjugate} $c^* :\bR^q\to\bR$ is the convex function defined as 
\begin{equation}
c^*(v) := \sup_{w \in \real^{q}} \bigl(\langle v, w\rangle - c(w) \bigr),\quad v\in\bR^q. \label{eq:cvxconjdef}
\end{equation}
For a convex function $c$, the {\em subdifferential set $\partial c$} of $c$ at each point $v\in\bR^q$ is defined as 
\[ 
\partial c(v) := \bigl\{ w \in \real^q : \text{for any }v' \in \real^q,\  c(v') - c(v) \geq \langle v'- v, w \rangle \bigr\}, 
\]
and each element in $\partial c(v)$ is called a {\em subgradient vector} of $c$ at $v$.

\begin{lemm}[See, e.g., {\cite[D.1 and E.1]{FundConvAnalysis2001HiriartUrrutyLemarechal}}]\label{lemm:cvxfcnprop}  
Let $c: \real^q \to (-\infty,+\infty]$ be a function such that $c\not \equiv +\infty$. 
\begin{enumerate}[(i)]
 \item \label{item:usualdiff} If $c$ is differentiable at a point $v \in \real^q$, then we have $\partial c(v)=\{\grad c(v)\}$, where $\grad c(v)\in\bR^q$ is
the gradient vector of $c$ at $v$. 
 \item \label{item:doubleconj}  
The function $c^{**}=(c^*)^*$ is characterized as the function such that for every $u \in \real$,
\begin{gather}
 \conv\bigl(L_{c,\le u}\bigr)=L_{c^{**},\le u}. 
\end{gather}
In particular, $c^{**}=c$ if $c$ is convex.
 \item \label{item:minsubdiff} 
If $c$ is convex, then $c^*(v) \geq -c(0)$ for any $v \in \real^q$, and the equality holds if and only if $v \in \partial c(0)$.
\end{enumerate}
\end{lemm}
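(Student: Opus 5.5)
The three items are standard facts of finite-dimensional convex analysis, and I would prove them in the order (iii), (i), (ii). The first two follow directly from the definitions of $c^*$ and $\partial c$. Item (ii) is the Fenchel--Moreau biconjugation theorem and needs a separation argument.

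For (iii), take $w=0$ in \eqref{eq:cvxconjdef}. This gives $c^*(v)\ge \langle v,0\rangle-c(0)=-c(0)$ for every $v$. Equality $c^*(v)=-c(0)$ means $\langle v,w\rangle-c(w)\le -c(0)$ for all $w\in\bR^q$. Rewritten, this is $c(w)-c(0)\ge\langle w-0,v\rangle$ for all $w$, which is exactly the definition of $v\in\partial c(0)$. Convexity is not even used here, beyond $c(0)<+\infty$, which is needed for the statement to be meaningful.

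For (i), let $w\in\partial c(v)$ and $h\in\bR^q$. Apply the subgradient inequality at $v'=v\pm\epsilon h$, divide by $\epsilon>0$, and let $\epsilon\to0$. Differentiability gives $\langle \grad c(v),h\rangle\ge\langle w,h\rangle$ and $-\langle \grad c(v),h\rangle\ge-\langle w,h\rangle$, so $w=\grad c(v)$. Thus $\partial c(v)\subset\{\grad c(v)\}$. For the reverse inclusion, which is where convexity of $c$ is needed (the statement is intended for convex $c$), I would argue as follows. For convex $c$ the difference quotient $\epsilon\mapsto (c(v+\epsilon(v'-v))-c(v))/\epsilon$ is nondecreasing on $(0,1]$. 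Its limit as $\epsilon\to0^+$ is $\langle\grad c(v),v'-v\rangle$, and its value at $\epsilon=1$ is $c(v')-c(v)$. This yields the subgradient inequality for $\grad c(v)$.

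For (ii), I would first note that $c^*$, and hence $c^{**}$, is a supremum of affine functions, so it is convex and lower semicontinuous. Moreover the Fenchel--Young inequality $c^{**}\le c$ holds. Consequently each $L_{c^{**},\le u}$ is a closed convex set containing $L_{c,\le u}$, which gives one inclusion. The converse inclusion is the main obstacle. Take a point $v_0$ outside the closed convex set generated by the epigraph of $c$. Separate $(v_0,u)$ strictly from the closed convex hull of $\{(w,t):t\ge c(w)\}$ in $\bR^q\times\bR$ by Hahn--Banach. A non-vertical separating hyperplane gives an affine minorant $\ell(w)=\langle a,w\rangle-b$ of $c$ with $\ell(v_0)>u$. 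Since $c^{**}$ is the supremum of all affine minorants of $c$, this forces $c^{**}(v_0)>u$. Vertical hyperplanes are handled by the usual trick: perturb them by a small multiple of an existing non-vertical affine minorant, which exists because $c\not\equiv+\infty$ and $c$ is continuous. This shows that $c^{**}$ is the closed convex hull of $c$, i.e.\ the largest lower semicontinuous convex minorant of $c$. The sublevel-set identity then follows from this description, and continuity of $c$ ensures the relevant hulls are closed. Finally, when $c$ is convex (and continuous), $c$ is its own closed convex hull, so $c^{**}=c$.
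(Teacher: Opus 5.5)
The paper gives no proof of this lemma; it only cites Hiriart-Urruty--Lemar\'echal. Your arguments for (i) and (iii) are fine. You are also right that the inclusion $\grad c(v)\in\partial c(v)$ in (i) needs convexity, and that (iii) only uses $c(0)<+\infty$.

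The genuine gap is in (ii), at the sentence ``the sublevel-set identity then follows from this description''. Your separation argument can at best give the epigraph statement $\operatorname{epi} c^{**}=\overline{\conv}(\operatorname{epi} c)$. That does not imply $L_{c^{**},\le u}\subset\conv(L_{c,\le u})$. A point $(v_0,u)$ in the convex hull of the epigraph is typically a convex combination of epigraph points at heights above and below $u$. So knowing $v_0\notin\conv(L_{c,\le u})$ gives you no way to separate $(v_0,u)$ from $\operatorname{epi} c$.

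In fact the identity is false. Take $q=1$ and the continuous function $c(w)=\min\{|w|,\,|w-2|+1\}$. Its largest convex minorant is $c^{**}(w)=|w|$ for $w\le 0$, $w/2$ for $0\le w\le 2$, and $w-1$ for $w\ge 2$, since the chord from $(0,0)$ to $(2,1)$ bridges the dip. Hence $L_{c^{**},\le 1/2}=[-1/2,1]$, while $\conv(L_{c,\le 1/2})=[-1/2,1/2]$. The finitely supported $c$ with $c(0)=0$, $c(2)=2$ and $c=+\infty$ elsewhere fails in the same way at $u=1$; this $c$ is of exactly the type of $W_{\Lambda,x}$. The function whose sublevel sets are the sets $\conv(L_{c,\le u})$ is the quasiconvex hull of $c$, not $c^{**}$. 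Only your inclusion $\conv(L_{c,\le u})\subset L_{c^{**},\le u}$ survives.

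Two supporting claims are also wrong.
\begin{enumerate}
\item A continuous $c\not\equiv+\infty$ need not have any affine minorant; for $c(w)=-|w|^2$ one has $c^*\equiv+\infty$. So your vertical-hyperplane perturbation needs an extra hypothesis, such as $c$ bounded below or convex.
\item Continuity of $c$ does not make $\conv(L_{c,\le u})$ closed. For $q\ge 2$, convex hulls of closed sets need not be closed, e.g.\ $\{(0,0)\}\cup(\bR\times\{1\})$.
\end{enumerate}

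What the paper actually uses, in Lemma~\ref{lemm:wtpoly}, is much weaker and true. There $c=W_{\Lambda,x}\ge 0$ is finite only at finitely many points $w_i$. For such $c$, $c^{**}(w)$ equals the minimum of $\sum_i\lambda_i c(w_i)$ over convex combinations $\sum_i\lambda_i w_i=w$ of those points. From this formula both $\bigcup_u L_{c^{**},\le u}=\conv\bigl(\bigcup_u L_{c,\le u}\bigr)$ and $L_{c^{**},\le 0}=\conv(L_{c,\le 0})$ follow at once. A correct version of (ii) must either be restricted to statements of this kind or be stated for epigraphs.
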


\subsection{Well-definedness of the order functions}
In the rest of this section, 
shall we come back to our situation in Section \ref{sec:intro}.
Recall that for $m\gg 1$, the order function $\ord\rho_x$ on $\BT_G(K)$
is associated to $\rho\in L_m$ and $x\in X_K(K)$, and is independent of the choice of evaluations  $\lshf^m|_x \simeq K$. 

For completeness, we include herewith a proof of the following. 
\begin{lemma}\label{th:const}
For any $m_1,m_2\gg 1$, 
any non-zero $\rho_1 \in \invsgen{m_1}, \rho_2 \in \invsgen{m_2}$, and any $x \in X_K(K)$,
if $\rho_i (x) \neq 0$ for $i = 1,2$, 
then the difference 
$\ord(\rho_1)_x/m_1-\ord(\rho_2)_x/m_2$ 
is constant on $\BT_G(K)$. 
\end{lemma}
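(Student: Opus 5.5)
Write $(\ord\rho_x)([g]) = \ord(\rho(x)) - \mu_m(g,x)$, where
\[
\mu_m(g,x):=\min_{s\in L_m}\ord\bigl((\sigma_*(g)\cdot s)(x)\bigr).
\]
Only the second term depends on $[g]$. The plan is therefore to show that $\mu_{m_1}(g,x)/m_1-\mu_{m_2}(g,x)/m_2$ does not depend on $g$, once evaluations compatible with tensor powers are fixed. The first term $\ord(\rho_i(x))/m_i$ is a constant for each $i$, so it drops out.

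\textbf{Step 1: well-definedness.} We first check that $\mu_m(g,x)$ is well defined on $\BT_G(K)$. For $h\in G(\cO_K)$, the operator $\sigma_*(h)$ preserves the lattice $L_m$, because the coaction $\sigma^*$ is defined over $\cO_K$. Hence
\[
\{\sigma_*(hg)\cdot s:s\in L_m\}=\{\sigma_*(g)\cdot s:s\in L_m\},
\]
since $\sigma_*(h)$ is bijective on $L_m$ with inverse $\sigma_*(h^{-1})$. The minimum exists because $L_m$ is a finitely generated free $\cO_K$-module. Taking a basis $(s_j)$, we get $\mu_m(g,x)=\min_j\ord((\sigma_*(g)s_j)(x))$, which is finite since $\cL^m$ is very ample. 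This also handles the evaluation choice: replacing the trivialization of $\cL^m|_x$ by $c\in K^\times$ shifts both $\ord\rho(x)$ and $\mu_m$ by $\ord c$.

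\textbf{Step 2: interpreting $\mu_m$ as a norm.} Next we identify $\mu_m(g,x)$ geometrically. Lift $g^{-1}\cdot x$, or rather $x$ transported by $g$ via the dual representation (Remark \ref{rem:actionlift}), to a vector $\widetilde{y}$ in the affine cone. Then $\mu_m(g,x)$ is the $L_m$-order of $\widetilde{y}$ evaluated on $\mathrm{Sym}$-type sections, i.e. the $\ord$ of the dual lattice norm. Concretely, $(\sigma_*(g)s)(x)$ equals $s(g^{-1}x)$ up to the linearization factor, and this factor is the same for all $s$. So $\mu_m(g,x)$ is the order of the point $g^{-1}x$ with respect to the integral model $\cL^m$, that is, the model metric $\|\cdot\|_{\cL^m}$ at $g^{-1}x$ expressed in the fixed trivialization transported by $g$.

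\textbf{Step 3: multiplicativity, the main obstacle.} The key claim is that $\mu_m = m\,\mu_1$ for $m\gg1$, or more precisely $\mu_{m_1 m_2}=m_2\mu_{m_1}$. Granting this, the statement follows by comparing both $m_1$ and $m_2$ to $m_1m_2$. The inequality $\mu_{am}\le a\mu_m$ is immediate, because $s^a\in L_{am}$ and $\sigma_*(g)$ is multiplicative on products of sections. The reverse inequality requires that, at any $K$-point $y$ of the flat projective $\cO_K$-scheme $X$, the minimum over $L_{am}$ is attained on products. This holds since $y$ extends to an $\cO_K$-point by properness (valuative criterion, even for non-noetherian $\cO_K$ via finite presentation). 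Along this section, $\cL^m$ pulls back to a free rank-one module, and since $\cL^m$ is globally generated for $m\gg1$, some $s\in L_m$ generates it. Consequently $\min_{s\in L_{m}}\ord s(y)$ is exactly the order of a local generator, which is visibly $m$ times that of $\cL$. Applying this at $y=g^{-1}x$ in the transported trivialization proves the claim and finishes the proof.

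I expect this step to be the main difficulty. The delicate points are the global generation for $m\gg1$ over the non-noetherian base $\cO_K$ and the care needed with the linearization factor.
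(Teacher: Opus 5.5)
Your proposal is correct and follows the paper's argument: the dependence on $\rho$ is only through the constant $\ord\rho(x)$, and different $m$ are compared by passing to powers. Your Step 3 (extending $x$ to an $\cO_K$-point and using global generation of $\cL^m$) proves the multiplicativity $\mu_{Mm}=M\mu_m$. The paper uses this without proof when it writes $\ord((\rho_i)^M)_x=M\cdot\ord(\rho_i)_x$.
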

\begin{proof}
Noting that $\ord ((\rho_i)^{M})_x = M\cdot\ord(\rho_i)_x$ for any $M\in\bN$, we assume without loss of generality that $m_1 = m_2=:m$ and $\cL^m$ is very ample. 
By \eqref{eq:invord}, 
for any non-zero $\rho\in \invs$ and any $[g] \in \BT_G(K)$, we have
\begin{align}
 (\ord \rho_x)([g]) - (\ord \rho_x)([e])
    & = \min_{s \in L_m} \ord\bigl(s(x)\bigr) - \min_{s \in L_m} \ord \bigl((\sigma_* (g) \cdot s ) (x)\bigr), \label{eq:difference}
\end{align}
which is independent of $\rho$, so that
$\ord(\rho_1)_x-\ord(\rho_2)_x$ is constant on $\BT_G(K)$.
\end{proof}

\begin{rem}\label{th:eval}
In computing the function $\ord \rho_x$ on $\BT_G(K)$, it is {\em sometimes} 
convenient to fix and use an evaluation $\lshf^m|_x \simeq K$ normalized as 
\begin{gather}
    \min_{s \in L_m} \ord\bigl(s(x)\bigr) = 0, \label{eq:normalize}
\end{gather}
and then we also use the induced evaluation $\lshf_k^m|_{\spe x} \simeq k$ by base change, so that
for any section $s \in L_m = H^0(X,\lshf^m )$,
letting $s_k \in H^0(X_k, \lshf^m_k)$ be the base change of $s$, we have
\begin{gather}
 s_k(\spe x) = \bigl(s(x)\text{ mod }\mathfrak{m}_K\bigr). \label{eq:normalizered}
\end{gather}
\end{rem}

\subsection{Weight polytopes, auxiliary functions, and differences}
Now pick also $m\gg 1$, $x\in X_K(K)$, 
and $\rho\in \invs$ such that $\rho (x) \neq 0$, and also pick a maximal torus $\Lambda:\bG_m^r\to G$.
Fix an evaluation $\lshf^m|_x \simeq K$ normalized as \eqref{eq:normalize} and a weight decomposition $(b_i)_{i=1}^{N_{\cL^m}}$ of $L_m$ having the weights $(w_i)_{i=1}^{N_{\cL^m}}$ (see \eqref{eq:weitdecomp}). Set
\[ 
\obody := \conv \Bigl( \bigl\{ ( w_i , y ) \in \bR^r\times\bR=\real^{r+1} : y \geq \ord\bigl(b_i(x)\bigr)(\ge 0\text{ under \eqref{eq:normalize}}) \bigr\}\Bigr),  
\]
and we introduce two auxiliary functions $\bdobody : \real^r \to [0,+ \infty]$ and $W_{\Lambda,x}: \real^r \to [0,+ \infty]$ as
\begin{align*}
 \bdobody (w) &:= \min_{(w,y) \in \obody}y,\quad w\in\bR^r,\quad\text{and}\\
 W_{\Lambda, x} (w) &:=\min_{i:\,w_i =w}\ord\bigl(b_i(x)\bigr),\quad  w\in\bR^r,
\end{align*}
where $\min_\emptyset = +\infty$ by convention; then $\bdobody = (W_{\Lambda , x})^{**}$, so in particular the function $\bdobody$ is convex and $(\bdobody)^* = (W_{\Lambda , x})^*$.

The weight polytopes introduced in Subsection \ref{sec:wp}
and the auxiliary function $\bdobody$ are related as follows.

\begin{lemma}\label{lemm:wtpoly} 
We have the equalities 
\begin{gather} 
\Delta ( x , \Lambda_K )
=(\bdobody)^{-1}\bigl([0,+ \infty)\bigr)  
\quad\text{and}\quad
\Delta (\spe x , \Lambda_k )
=(\bdobody)^{-1}(0).\label{eq:kbody}
\end{gather}
\end{lemma}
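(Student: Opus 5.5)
The plan is to unwind both sides into statements about the finite set of weights $w_i$ and the values $\ord(b_i(x))$. Throughout, fix the normalization \eqref{eq:normalize} and the induced evaluation at $\spe x$ from Remark \ref{th:eval}.

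\emph{First equality.} Write $\Delta(x,\Lambda_K)=\conv\{w_i: b_i(x)\neq 0\}$ using \eqref{eq:wpdecomp}. This is legitimate because the $K$-base change of the weight decomposition $(b_i)$ of $L_m$ is a weight decomposition of $V_m$ for $\Lambda_K$.

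The condition $b_i(x)\neq0$ is equivalent to $\ord(b_i(x))<+\infty$, i.e.\ to $W_{\Lambda,x}(w_i)<+\infty$. So the effective domain $\{W_{\Lambda,x}<+\infty\}$ is exactly the finite set $\{w_i:b_i(x)\neq0\}$.

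Next, $\obody$ is the convex hull of finitely many upward vertical rays over these points; the rays with $\ord=+\infty$ are empty and contribute nothing. Its projection to $\bR^r$ is therefore $\conv\{w_i:b_i(x)\neq0\}$. On this projection $\bdobody$ is finite, since the minimum is attained: $\obody$ is a closed polyhedron, being the Minkowski sum of a polytope and a ray. Off the projection $\bdobody=+\infty$. Nonnegativity holds because all the generating heights are $\ge0$. This gives $\Delta(x,\Lambda_K)=(\bdobody)^{-1}([0,+\infty))$.

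\emph{Second equality.} First, the reductions $(b_i)_k$ form a basis of $L_m\otimes k=H^0(X_k,\cL_k^m)$, with the same weights, for $\Lambda_k$. This uses flatness and the fact that $L_m$ is a lattice, with $H^0$ commuting with base change for $m\gg1$ by Serre vanishing. Hence $\Delta(\spe x,\Lambda_k)=\conv\{w_i:(b_i)_k(\spe x)\neq0\}$.

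By \eqref{eq:normalizered}, $(b_i)_k(\spe x)\neq0$ holds iff $\ord(b_i(x))=0$. So
\[
\Delta(\spe x,\Lambda_k)=\conv\{w_i:\ord(b_i(x))=0\}.
\]
Since $\bdobody\ge0$, the set $(\bdobody)^{-1}(0)$ is the projection of the face $\obody\cap\{y=0\}$. Because every generator has height $\ge0$, any convex combination landing at height $0$ uses only generators of height exactly $0$, namely the points $(w_i,0)$ with $\ord(b_i(x))=0$. Hence this face is $\conv\{(w_i,0):\ord(b_i(x))=0\}$, and projecting gives the claim.

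\emph{Main obstacle.} Nonemptiness of the level-$0$ set needs care: since $\min_{s\in L_m}\ord(s(x))=0$ and the $b_i$ form an $\cO_K$-basis of $L_m$, we get $\min_i\ord(b_i(x))=0$ by the ultrametric inequality. So both sides are nonempty and the identification is consistent. The main point requiring justification is that $(b_i)_k$ is genuinely a weight basis of the special fibre sections, so that \eqref{eq:wpdecomp} applies over $k$. I would take this from the $\cO_K$-module weight decomposition \eqref{eq:weitdecomp}, valid under any base change, together with $L_m\otimes_{\cO_K}k\simeq H^0(X_k,\cL_k^m)$ for $m\gg1$.
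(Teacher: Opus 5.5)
Your proof is correct, and its skeleton matches the paper's. Both arguments reduce the two sides to $\conv\{w_i : b_i(x)\neq 0\}$ and $\conv\{w_i : \ord(b_i(x))=0\}$, using the base change of the weight decomposition to $K$ and to $k$ together with \eqref{eq:normalizered}.

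The difference is only in how these hulls are matched with sublevel sets of $\bdobody$. The paper invokes $\bdobody=(W_{\Lambda,x})^{**}$ and the sublevel-set description of the biconjugate in Lemma~\ref{lemm:cvxfcnprop}\eqref{item:doubleconj}. You instead read both facts off the polyhedron $\obody$ itself: its projection to $\bR^r$, and its height-zero face. For the face, the observation that a convex combination of generators of height $\ge 0$ can land at height $0$ only if every generator used has height $0$ does the work.

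Your route is more elementary and self-contained, and it also justifies that the minimum defining $\bdobody$ is attained. It also uses only what is actually true here: the domain of the lower convex envelope is the hull of the domain, and its zero set at the minimal level $0$ is the hull of the height-zero points. By contrast, the general identity $\conv(L_{c,\le u})=L_{c^{**},\le u}$ quoted in the paper can fail at intermediate levels. For example, take $c$ on $\bR$ equal to $0$ at $0$, $1$ at $1$, and $+\infty$ elsewhere, with $u=1/2$. The paper, however, only needs that identity after taking the union over $u\ge 0$ and at $u=0$, where it does hold.

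You also make explicit the base-change input $L_m\otimes_{\cO_K}k\simeq H^0(X_k,\cL_k^m)$, which the paper uses tacitly. The nonemptiness check in your last paragraph is harmless but not needed for the equalities.
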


\begin{proof} 
Noting that
$\{ w_i : b_i(x) \neq 0 \}=\{ w_i : \ord(b_i(x)) < +\infty \}$ and that the convex hull of the right hand side equals that of $W_{\Lambda, x}^{-1}([0,+\infty))$, we have
\begin{align*}
\Delta (x, \Lambda_K ) 
= \conv\bigcup_{u \ge 0}L_{W_{\Lambda, x},\le u} 
&= \bigcup_{u \ge 0}\conv\bigl(L_{W_{\Lambda, x},\le u}\bigr)\\ 
&=\bigcup_{u\ge 0}L_{W_{\Lambda, x}^{**},\le u} 
=\bigcup_{u\ge 0}L_{\bdobody,\le u} =(\bdobody)^{-1}\bigl([0,+ \infty)\bigr) 
\end{align*}
by Lemma \ref{lemm:cvxfcnprop}\eqref{item:doubleconj}.
Similarly, noting that 
the fixed weight decomposition $(b_i)_{i = 1}^{N_{\lshf^m}}$ of $L_m = H^0(X,\lshf^m )$ induces by base change a weight decomposition $(b_{i,k})_{i = 1}^{N_{\cL^m}}$ of $H^0(X_k,\lshf^m_k)$ still having the weights $(w_i)_{i=1}^{N_{\cL^m}}$ and that $\{ w_i : b_{i,k}(\spe x) \neq 0 \}\overset{\eqref{eq:normalizered}}{=}\{ w_i :  \ord b_i(x) = 0  \}$, we also have $\Delta (\spe x, \Lambda_K )=(\bdobody)^{-1}(0)$.
\end{proof}  
 
Let $\delta_x:\BT_G(K)\to\bR$ be the difference function
\begin{gather}
\delta_x ([g]) := \ord \rho_x([g]) -  \ord \rho_x([e]) \label{eq:defdelta}
\end{gather}
of the function $\ord\rho_x$ on $\BT_G(K)$. In the rest of this subsection, assume that the maximal torus $\Lambda$ is chosen as in Theorem \ref{rbuild}(\ref{aparttor}). On the apartment $A=f_{\Lambda,e}(\bR^r)$ in $\BT_G^\real (K)$, the functions $\delta_x$ and $\bdobody$ are related as in the following proof.
On the apartmentwise convex extendability of $\delta_x$, see also \cite{SternWewers26} in the discrete valued field case. 

\begin{lemma} \label{lemm:dap}
Let $f : \real^r \to \BT_G^\real (K)$ be an apartment and set $A := f(\real^{r})$. Then
the restriction $\delta_x|(A \cap \BT_G(K))$ 
extends uniquely to a convex function $\delta_A$ on $A$, which attains its infimum on $A\cap(f(\Gamma^r))$. 
\end{lemma}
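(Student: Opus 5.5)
By Theorem \ref{rbuild}(\ref{aparttor}), the apartment has the form $f=f_{\Lambda,g}$ for a maximal torus $\Lambda$ and some $g\in G(K)$. The plan is to reduce to $g=e$, compute $\delta_x$ on $f(\Gamma^r)$ via the weight decomposition, and identify the result with the conjugate $(\bdobody)^*$.

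For the reduction, apply \eqref{eq:difference}: replacing $x$ by $g\cdot x$ changes $\delta_x\circ f_{\Lambda,g}$ only by the additive constant $\delta_x([g])$. More precisely, $\delta_x([\Lambda(t) g])=\delta_{g\cdot x}([\Lambda(t)])+\delta_x([g])$, which uses the cocycle identity $\sigma_*(\Lambda(t)g)=\sigma_*(\Lambda(t))\sigma_*(g)$. So we may assume $g=e$ and work with $\Lambda$ and $x$ directly.

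Next fix the weight decomposition $(b_i)$ of $L_m$ and the normalization \eqref{eq:normalize}. Since $(b_i)$ is an $\cO_K$-basis of $L_m$ (by Lemma \ref{lemm:toruslift} and the lifting setup, $\Lambda$ is defined over $\cO_K$), the ultrametric inequality gives
\[
\min_{s\in L_m}\ord\bigl((\sigma_*(\Lambda(t))s)(x)\bigr)=\min_i\ord\bigl(t^{w_i}b_i(x)\bigr)=\min_i\bigl(\langle v_t,w_i\rangle+\ord b_i(x)\bigr).
\]
The first equality needs justification: write $s=\sum a_ib_i$ with $a_i\in\cO_K$. Each term has order at least $\min_i$, and taking $s=b_i$ attains the minimum. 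Hence
\[
\delta_x(f(v))=-\min_i\bigl(\langle v,w_i\rangle+\ord b_i(x)\bigr)=\max_i\bigl(\langle -v,w_i\rangle-\ord b_i(x)\bigr)=(W_{\Lambda,x})^*(-v)
\]
for $v\in\Gamma^r$. The right-hand side is a maximum of finitely many affine functions of $v$, so it is defined, finite and convex on all of $\bR^r$. It therefore gives the convex extension $\delta_A:=(W_{\Lambda,x})^*(-\,\cdot\,)\circ f^{-1}$. Uniqueness follows because $\Gamma^r$ is dense in $\bR^r$ and a finite convex function on $\bR^r$ is continuous, hence determined by its values on a dense set. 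Here convexity on $A$ means via the affine structure given by $f$. This is independent of the choice of $f$ parametrizing $A$, since different parametrizations differ by elements of $W$, which act affinely.

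The main point is that the infimum is attained, and at a point of $f(\Gamma^r)$. The function $h(v)=\max_i(\langle -v,w_i\rangle-\ord b_i(x))$, with $i$ ranging over indices with $b_i(x)\neq0$, is a piecewise-affine convex function of $v$. Its infimum over $\bR^r$ is either $-\infty$ or attained, by the standard fact that a polyhedral convex function bounded below attains its minimum (linear programming duality). Moreover, when attained, the minimum set is a polyhedron cut out by equalities and inequalities of the form $\langle v,w_i-w_j\rangle\gtreqless\ord b_j(x)-\ord b_i(x)$. Here the $w_i$ are integral and the right-hand sides lie in $\Gamma$, which is a $\bQ$-vector space since $K$ is algebraically closed (divisible group). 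A nonempty polyhedron defined over $(\bQ,\Gamma)$ has a point in $\Gamma^r$: a vertex of a minimal face solves a rational linear system with $\Gamma$-valued right-hand side, and Cramer's rule keeps it in $\Gamma^r$. Thus the minimum is attained on $f(\Gamma^r)$.

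For the case $\inf=-\infty$, the statement should be read as ``attains its infimum if finite''. It is harmless to note that $\inf h=-(\bdobody)^{**}(0)$-type identities hold by Lemma \ref{lemm:cvxfcnprop}\eqref{item:minsubdiff}, so finiteness is equivalent to $0\in\Delta(x,\Lambda_K)$ by Lemma \ref{lemm:wtpoly}. Since $\rho$ is invariant with $\rho(x)\ne0$, $x$ is semistable, and Theorem \ref{th:hmcrit} gives $0\in\Delta$, so the infimum is indeed finite. I expect the rationality/vertex argument placing the minimizer in $\Gamma^r$ to be the step needing the most care.
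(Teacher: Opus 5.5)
Your reduction to $g=e$ and the identification $\delta_x\circ f=(W_{\Lambda,x})^*(-\,\cdot\,)=(\bdobody)^*(-\,\cdot\,)$ on $\Gamma^r$ are the paper's argument. You also keep the additive constant $\delta_x([g])$ that \eqref{eq:basepoint} drops, which is harmless for convexity and for the minimum locus.

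The routes differ in the attainment step. The paper applies Lemma \ref{lemm:cvxfcnprop}(\ref{item:minsubdiff}) to $\bdobody$. This gives at once the minimum value $-\bdobody(0)$ and the minimum locus $-\partial(\bdobody)(0)$, i.e.\ \eqref{eq:minimum} and \eqref{eq:minlocus}. It then finds a $\Gamma^r$-point there from $w_i\in\bZ^r$ and the divisibility of $\Gamma$. You instead treat the max-of-affine function $h$ directly by linear programming, and exhibit its minimum set as a polyhedron defined over $(\rat,\Gamma)$.

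Your version makes explicit two facts the paper uses tacitly. The first is $\bdobody(0)<+\infty$. Your semistability argument works, but there is a simpler one: $\rho$ is $\Lambda$-invariant, hence a combination of the $b_i$ with $w_i=0$. So some weight-zero $b_i$ has $b_i(x)\neq0$, and then $h\ge-\ord b_i(x)$. The second is $\partial(\bdobody)(0)\neq\emptyset$. This needs polyhedrality, not just convexity, since $0$ may lie on the boundary of $\Delta(x,\Lambda_K)$.

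In exchange, the paper's version produces the explicit identities \eqref{eq:minimum} and \eqref{eq:minlocus}. These are exactly what the proof of Theorem \ref{th:genmininvloc} uses: it compares $-\bdobody(0)$ with $0$, and it uses $\partial(\bdobody)(0)=\{0\}$ in the stable case. Your route records the minimum value only in passing.

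One wording fix is needed. All $w_i$ with $b_i(x)\neq0$ may lie in a hyperplane, which can happen when $x$ is semistable but not stable. In that case the minimum set has a nontrivial lineality space, and its minimal faces are affine subspaces with no vertices. So instead of ``a vertex plus Cramer's rule'', solve the consistent linear system with rational coefficients and $\Gamma$-valued right-hand side by Gaussian elimination over $\rat$, or first adjoin coordinate equations. This still yields a point of $\Gamma^r$.
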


\begin{proof}
By Theorem \ref{rbuild}\eqref{aparttor}, 
we have $f=f_{\Lambda,g}$
for some  
$g \in G(K)$, and then for every $t=(t_1,\ldots,t_r)\in(K^\times)^r$, setting $v_t:=(\ord t_1,\ldots,\ord t_r)\in\Gamma^r$, 
\begin{gather}
 \delta_x\bigl(f(v_t)\bigr)=\delta_x([\Lambda(t)\cdot g])=\delta_{g\cdot x}([\Lambda(t)])=\delta_{g\cdot x}\bigl(f_{\Lambda,e}(v_t)\bigr), \label{eq:basepoint}
\end{gather}
recalling that $\ord \rho_{g \cdot x} (h) = \ord \rho_x(h \cdot g)$ for every $h\in G(K)$. Hence we assume without loss of generality that $g=e$ replacing $x$ with $g\cdot x$ if necessary.

Under this assumption $g=e$ so $f=f_{\Lambda,e}$, for every $t\in(K^\times)^r$, we compute
\begin{align*}
\delta_x\bigl(f(v_t)\bigr)
    & = \ord \rho_x ([\Lambda (t)]) - \ord \rho_x ([e]) \notag \overset{\eqref{eq:invord}}{=}
- \min_{s \in L_m} \ord\bigl(((\sigma_*(\Lambda(t))) \cdot s) (x)\bigr) + \min_{s\in L_m}\ord\bigl(s(x)\bigr) \\
    &\overset{\eqref{eq:normalize}}{=}
- \min_{i} \ord\bigl(((\sigma_*(\Lambda(t))\cdot b_i) (x)\bigr) +0 \overset{\eqref{eq:weitdecomp}}{=}
-\min_i\bigl( \bigl\langle w_i,v_t \bigr\rangle +\ord\bigl(b_i(x)\bigr) \bigr) \notag \\
    & =-\min_i\bigl( \bigl\langle w_i,v_t \bigr\rangle + W_{\Lambda, x} (w_i)\bigr)
    = - \inf_{w \in \real^r} \bigl( \langle w , v_t \rangle + W_{\Lambda, x} (w) \bigr)\\
    &= \sup_{w \in \real^r} \bigl( \langle w, -v_t \rangle - W_{\Lambda,x}(w)\bigr) 
    \overset{\eqref{eq:cvxconjdef}}{=} (W_{\Lambda,x})^*(-v_t ) = (\bdobody )^*( -v_t ), 
\end{align*}
which yields the unique convex extension $\delta_A$
of the restriction $\delta_x|(A \cap \BT_G(K))$
so that 
\begin{gather}
 (\delta_A \circ f)(v) = (\bdobody )^*(-v),\quad v\in\bR^r.\label{eq:pullback}
\end{gather}
Then by Lemma \ref{lemm:cvxfcnprop}(\ref{item:minsubdiff}) applied to the convex function $\bdobody$, we have both
\begin{gather}
\min_{\bR^r}(\delta_A\circ f)=-\bdobody (0)\quad\text{and}\label{eq:minimum}\\
 (\text{the minimum locus of the pullback }\delta_A \circ f)=-\bigl(\partial ( \bdobody )\bigr)(0)\subset \bR^r,\label{eq:minlocus}
\end{gather}
and indeed $(-(\partial ( \bdobody ))(0))\cap\Gamma^r\neq\emptyset$ since $w_i \in \zahl^{r}$ for every $i$ and $\Gamma$ is a $\rat$-vector space.
\end{proof}

The following is a consequence of Definition \ref{def:rbuild}(\ref{apcommon}) and Lemma \ref{lemm:dap}.
\begin{lemma}\label{lemm:delta}
The difference function $\delta_x$ on $\BT_G(K)$
extends uniquely to a convex function $\hat{\delta}_x$ on the Euclidean building $\BT^\real_G(K)$. 
\end{lemma}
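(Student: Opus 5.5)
The plan is to define $\hat{\delta}_x$ apartmentwise and check consistency. For a point $p\in\BT^\real_G(K)$, choose an apartment $f:\real^r\to\BT^\real_G(K)$ with $p\in A:=f(\real^r)$; such an apartment exists by Definition \ref{def:rbuild}(\ref{item:join}). Then set $\hat{\delta}_x(p):=\delta_A(p)$, where $\delta_A$ is the convex extension from Lemma \ref{lemm:dap}. Two things remain: this value must not depend on the choice of $A$, and the resulting function must be convex along geodesics. Uniqueness is automatic, because $\BT_G(K)$ is dense in $\BT^\real_G(K)$ (Theorem \ref{rbuild}(\ref{item:dense})) and any convex extension restricted to an apartment is a convex, hence continuous, function on $f(\real^r)$. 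Here the finiteness of $\delta_A$ is used: formula \eqref{eq:pullback} expresses $\delta_A$ as the conjugate of $\bdobody$, whose effective domain is the compact polytope $\Delta(x,\Lambda_K)$, so $\delta_A$ is a finite piecewise-linear function.

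For well-definedness, take two apartments $f,f'$ with $p\in f(\real^r)\cap f'(\real^r)$. By Definition \ref{def:rbuild}(\ref{apcommon}), the set $C:=C_{f,f'}$ is convex in $\real^r$, and $(f')^{-1}\circ f|_C=w|_C$ for some $w\in W=W_{G_K}\ltimes\Gamma^r$. Since $w$ preserves $\Gamma^r$, it maps $C\cap\Gamma^r$ into $\Gamma^r$. On $f(C\cap\Gamma^r)\subset\BT_G(K)$, both $\delta_A$ and $\delta_{A'}$ coincide with $\delta_x$. Each of $\delta_A\circ f$ and $\delta_{A'}\circ f'\circ w$ is continuous on $\real^r$, by the polyhedral description \eqref{eq:pullback}. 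Hence they agree on the closure of $C\cap\Gamma^r$ in $C$.

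The main obstacle is that $C\cap\Gamma^r$ need not be dense in $C$: for instance, $C$ could be a lower-dimensional convex set lying in an affine subspace that avoids $\Gamma^r$. I will first reduce to the case where $C$ is a polyhedron cut out by walls of $W$, whose vertices can be taken in a $\Gamma$-rational position. In the Bruhat–Tits setting the intersection of two apartments is an intersection of half-apartments, i.e.\ of half-spaces bounded by affine root hyperplanes $\{\alpha=\gamma\}$ with $\gamma\in\Gamma$. Since $\Gamma$ is a divisible dense subgroup of $\real$, such a $\Gamma$-rational polyhedron has $\Gamma^r$-points dense in it, relative interior included. If this structural fact is not available from the axioms alone, the fallback is a different argument: pass through a third apartment containing a Weyl chamber germ at $p$ (Definition \ref{def:rbuild}(v)) and compare values along $\Gamma$-rational segments approaching $p$, which again uses only continuity.

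For convexity, take $p,q\in\BT^\real_G(K)$. By Definition \ref{def:rbuild}(\ref{item:join}) there is an apartment $A$ containing both points, and since $\BT^\real_G(K)$ is $\operatorname{CAT}(0)$, the unique geodesic $[p,q]$ is the affine segment inside $A$. Along that segment $\hat{\delta}_x=\delta_A$, which is convex by Lemma \ref{lemm:dap}. This proves that $\hat{\delta}_x$ is a convex extension, and it is unique by the density argument above.
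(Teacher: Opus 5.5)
Your route is the paper's route. The paper's entire proof of this lemma is the remark that it follows from Definition \ref{def:rbuild}(\ref{apcommon}) and Lemma \ref{lemm:dap}: extend apartmentwise and glue along $C_{f,f'}$. The paper never addresses the density problem you raise. That problem is real, since Definition \ref{def:rbuild}(\ref{apcommon}) gives only convexity of $C_{f,f'}$. However, neither of your resolutions closes it as written, so well-definedness is still unproved.

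\begin{itemize}
\item The structural claim is an outside fact about Bruhat--Tits buildings that is not contained in Theorem \ref{rbuild}. What you would actually need is that $C_{f,f'}$ is a \emph{finite} intersection of $\Gamma$-rational half-spaces. An infinite intersection can fail: for $c\in\real\setminus\Gamma$, the intersection of all $\{v\ge\gamma\}$ with $\gamma\le c$ and all $\{v\le\gamma\}$ with $\gamma\ge c$ ($\gamma\in\Gamma$) is the single point $\{c\}$, which contains no $\Gamma$-point.
\item The fallback is not yet an argument. Definition \ref{def:rbuild}(v), as stated, is satisfied by taking $f'=f$, so it provides nothing. You would need an apartment containing chamber germs at $p$ from both $A$ and $A'$. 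That is a standard property of $\real$-buildings, but it is not among the listed axioms; with it, your density argument would work because the relevant $C$'s have nonempty interior.
\item ``Only continuity'' is also not enough when you compare values in varying apartments. You need a modulus of continuity that is uniform in the apartment.
\end{itemize}

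That uniformity is available, and it gives a clean proof that never looks at $C_{f,f'}$.
\begin{itemize}
\item By the computation in the proof of Lemma \ref{lemm:dap} (after \eqref{eq:basepoint}), for every apartment $f=f_{\Lambda,g}$ the function $\delta_x\circ f$ on $\Gamma^r$ equals, up to an additive constant, $v\mapsto\max_{i:\,b_i(g\cdot x)\neq 0}\bigl(-\langle w_i,v\rangle-\ord b_i(g\cdot x)\bigr)$.
\item This function is $L$-Lipschitz with $L:=\max_i\|w_i\|$. The constant $L$ does not depend on $g$, because Theorem \ref{rbuild}(\ref{aparttor}) uses a single torus $\Lambda$ for all apartments.
\item Any two points of $\BT_G(K)$ lie in a common apartment (Definition \ref{def:rbuild}(\ref{item:join})), on which $d$ is the Euclidean distance. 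Hence $\delta_x$ is $L$-Lipschitz on $\BT_G(K)$.
\item By density (Theorem \ref{rbuild}(\ref{item:dense})), $\delta_x$ therefore has a unique continuous extension $\hat{\delta}_x$ to $\BT^\real_G(K)$.
\item On each apartment $A=f(\real^r)$, both $\hat{\delta}_x\circ f$ and $\delta_A\circ f$ are continuous and agree on the dense set $\Gamma^r$, so $\hat{\delta}_x|_A=\delta_A$.
\end{itemize}
This is exactly the well-definedness you need, and your convexity and uniqueness paragraphs then go through unchanged.
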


\section{Proof of Theorem \ref{th:genmininvloc}}\label{sec:coincide}
Pick $m\gg 1$, $x\in X_K(K)$, and $\rho \in I_m$ such that $\rho (x) \neq 0$, and fix an evaluation $\lshf^m|_x \simeq K$ normalized as \eqref{eq:normalize}.
We note that by the definition $\delta_x = \ord \rho_x - \ord \rho_x ([e])$, $\MIL_x$ and the minimal locus of $\delta_x$ are identical. 

Suppose that $[g]\not \in \RSS_x$, and we assume without loss of generality that $[g]=[e]$ replacing $x$ with $g\cdot x$ if necessary (see \eqref{eq:basepoint}). 
Then by the implication ``(\ref{item:nonnull})$\Rightarrow$(\ref{item:ss})'' in Theorem \ref{th:hmcrit} applied to $\spe x$, there is some maximal torus $\ol{\Lambda} : 
(\bG_m^r)_k \to G_k$, which lifts to a maximal torus $\Lambda : \bG_m^r\to G$ (i.e.\ $\Lambda_k=\overline{\Lambda}$) by Lemma \ref{lemm:toruslift}, such that
fixing a weight decomposition \eqref{eq:weitdecomp} of $L_m$, 
\[ 
0 \not \in \Delta \bigl( \spe x , \ol{\Lambda }\bigr)
\overset{\eqref{eq:kbody}}{=}(\bdobody)^{-1}(0),
\]
which together with \eqref{eq:minimum}
yields $\min_{f_{\Lambda,e}(\Gamma^r)}\delta_x=-\bdobody (0) < 0=\delta_x ([e])$.
Hence we have $([g]=)[e]\not\in\MIL_x$.
 
Suppose alternatively that $[g] \in \RSS_x$.
Let us fix a maximal torus $\Lambda:\bG_m^r\to G$ as in Theorem \ref{rbuild}(\ref{aparttor}). Then 
for every $[h]\in\BT_{G}(K)\setminus\{[g]\}$, 
recalling Definition \ref{def:rbuild}(\ref{item:join}), 
we have $\{[g],[h]\}\subset f(\Gamma^r)$ for some apartment $f:\bR^r\to\BT_G^{\bR}(K)$, and we assume without loss of generality that $[g]=[e]$ and $f=f_{\Lambda,e}$ replacing $\Lambda$ and $x$ with some $G$-conjugate of $\Lambda$ and some point in $G{(K)}\cdot x$
respectively if necessary (see \eqref{eq:basepoint}).
By the implication ``(\ref{item:ss}) $\Rightarrow$ (\ref{item:nonnull})'' in Theorem \ref{th:hmcrit} applied to $\spe x$ and $\Lambda_k$, fixing a weight decomposition \eqref{eq:weitdecomp} of $L_m$, we have
\[ 
0 \in \Delta \bigl( \spe x , \Lambda_k\bigr)
\overset{\eqref{eq:kbody}}{=}(\bdobody)^{-1}(0), 
\]
which together with \eqref{eq:minimum}
yields $\delta_x([h])\ge\min_{f_{\Lambda,e}(\Gamma^r)}\delta_x=-\bdobody (0)=0=\delta_x ([e])$.
Hence we also have $([g]=)[e]\in\MIL_x$. 
If moreover $([g] = ) [e] \in \RS_x$, then we even have 
$0 \in \Int\Delta (\spe x,  \Lambda_k )=\Int((\bdobody)^{-1}(0))$,
which together with
Lemma \ref{lemm:cvxfcnprop}(\ref{item:usualdiff}) and \eqref{eq:minlocus} yields
$(\text{the minimum locus of }\delta_x\circ f_{\Lambda,e}) 
= -(\partial (\bdobody ))(0)=\{0\}$
so $\delta_x([h])>\delta_x(f_{\Lambda,e}(0))=\delta_x([e])$.
Hence we even have $\MIL_x=\{[e]\}(=\{[g]\})$ in this stable reduction case.
\qed

\section{Proof of Theorem \ref{th:existence}: non-emptiness of $\MIL_x$ in the complete case}\label{sec:SphComp}

We prepare an auxiliary lemma. 
\begin{lemma}\label{lem:complation}
    Let $\oring_K$ be a complete valuation ring and $K$ its fraction field. Let $M,N\in\bZ$, $M\leq N$, $\pi :K^N=K^M\times K^{N-M}\to K^M\cong K^N/K^{N-M}$ the projection, and $Z$ a Zariski closed subset of $K^N$. If there is a sequence $(z_n)$ in $Z\cap(\oring_K)^N$ such that $\lim_{n\to\infty}\pi (z_n)=0$ in the metric topology, then $Z \cap (\pi^{-1}(0)) 
\neq \emptyset$.
\end{lemma}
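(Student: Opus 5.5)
The plan is to argue by contradiction via Hilbert's Nullstellensatz, and then to use the integrality of the $z_n$ to turn an algebraic identity into a valuative contradiction. We work under the standing assumption of the paper that $K$ is algebraically closed, so that an empty set of $K$-points means an empty variety. Write the coordinates of $K^N=K^M\times K^{N-M}$ as $(x,y)=(x_1,\dots,x_M,y_1,\dots,y_{N-M})$, so that $\pi(x,y)=x$. Let $f_1,\dots,f_s\in K[x,y]$ generate the ideal of $Z$. Multiplying each $f_j$ by a suitable non-zero constant, we may assume $f_j\in\oring_K[x,y]$.

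Suppose that $Z\cap\pi^{-1}(0)=\emptyset$. This set is the zero locus of the ideal $(f_1,\dots,f_s,x_1,\dots,x_M)\subset K[x,y]$. Since $K$ is algebraically closed, the Nullstellensatz shows that this ideal is the unit ideal. Hence there are $a_j,h_i\in K[x,y]$ with
\begin{gather*}
1=\sum_{j=1}^s a_j f_j+\sum_{i=1}^M x_i h_i .
\end{gather*}
Only finitely many coefficients of $K$ occur in this identity. So there is some $c\in K^\times$ such that $a_j':=ca_j$ and $h_i':=ch_i$ all lie in $\oring_K[x,y]$, and we get
\begin{gather*}
c=\sum_{j} a_j' f_j+\sum_{i} x_i h_i'.
\end{gather*}

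Now evaluate this identity at $z_n=(x^{(n)},y^{(n)})\in Z\cap(\oring_K)^N$. Each $f_j(z_n)$ vanishes because $z_n\in Z$. Each $h_i'(z_n)$ lies in $\oring_K$, because $h_i'$ has integral coefficients and $z_n$ has integral coordinates. Therefore
\begin{gather*}
\ord c\ \ge\ \min_{1\le i\le M}\ord\bigl(x^{(n)}_i\bigr).
\end{gather*}
By hypothesis $\pi(z_n)=x^{(n)}\to 0$, so the right-hand side tends to $+\infty$. This contradicts $\ord c<+\infty$, and hence $Z\cap\pi^{-1}(0)\neq\emptyset$.

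The main step to handle carefully is the clearing of denominators. It works only because the coordinates of the $z_n$ are integral: without integrality, the values $h_i'(z_n)$ could blow up and cancel the smallness of $x^{(n)}$. The other point to watch is the use of the Nullstellensatz, which needs algebraic closedness to pass from ``no $K$-point'' to ``unit ideal''. For this reason we invoke the paper's standing assumption that $K=\ol{K}$. Completeness itself is not used beyond the setting.
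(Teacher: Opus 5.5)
Your proof is correct in the paper's standing setting ($K=\ol{K}$), and it takes a genuinely different route from the paper.

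\textbf{The paper's route.} It argues through $\oring_K\simeq\varprojlim\oring_K/J_n$. It observes that each $Z_n=\pi_n(Z)\cap(\{0\}^M\times(\oring_K/J_n)^{N-M})$ is nonempty, and then concludes $Z\cap\pi^{-1}(0)=\bigcap_n\pi_n^{-1}(Z_n)\neq\emptyset$ from completeness. This route is meant to cover an arbitrary complete valuation ring. However, its last step does not follow from completeness alone. The sets $\pi_n^{-1}(Z_n)$ are decreasing and nonempty, but their diameters need not shrink, and $\oring_K$ is not compact when $k$ is infinite (as here, since $k=\ol{k}$). Choosing a compatible system of residues modulo the $J_n$ therefore needs an extra approximation-type input.

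\textbf{Your route.} You use the weak Nullstellensatz to get an integral certificate $c=\sum_j a_j'f_j+\sum_i x_ih_i'$. This gives the uniform bound $\min_i\ord x_i(z)\le\ord c$ for every $z\in Z\cap(\oring_K)^N$ whenever $Z\cap\pi^{-1}(0)=\emptyset$.

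\textbf{What your route gains.} It never uses completeness. It is quantitative: a single $z_n$ with $\min_i\ord x_i^{(n)}>\ord c$ already gives the contradiction. Every step is fully justified.

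\textbf{What it costs.} It needs $K=\ol{K}$ to pass from ``no $K$-point'' to ``unit ideal.'' So it proves the lemma only under that hypothesis, not for an arbitrary complete valuation ring as literally stated. That is exactly the case used in the proof of Lemma \ref{lemm:sublevelbdd}.

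In that setting, your argument gives a complete and self-contained proof.
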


\begin{proof}
    We assume without loss of generality that $Z$ is defined as the common zeros of polynomials having $\oring_K$-coefficients. For each $n\in\bN$, set the ideal $J_n := \{a \in K : \ord a \geq n\}$ in $\cO_K$ and let $\pi_n:(\oring_K)^N\to (\oring_K/J_n)^N$ be the projection. Then under the assumption, we have
    \begin{gather*}
Z_n:=\pi_n(Z)\cap \bigl( \{ 0_{\cO_K} \text{ mod } J_n \}^M \times (\oring_K/J_n)^{N-M}\bigr)\neq \emptyset,
    \end{gather*}
    and in turn $Z\cap\pi^{-1}(0)=\bigcap_n \pi_n^{-1} (Z_n )\neq\emptyset$
by the completeness of $\oring_K$, which is $\simeq \varprojlim \oring_K/J_n$.
\end{proof}

Let us also denote by $\|\cdot\|$ the Euclidean norm on $\bR^r$, and by $\bS^r$ the unit sphere in $\bR^r$.

\begin{lemma}\label{lemm:sublevelbdd}
 Pick $x\in X^{\st}(\cL)$. For any $u \in \real$, the sublevel set 
    $\hat{L}_{\hat{\delta}_x , \leq u}$ 
 is bounded in the (not necessarily complete) $\operatorname{CAT}(0)$-space $\BT_G^\real (K)$.
\end{lemma}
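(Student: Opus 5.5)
We argue by first bounding $\hat{\delta}_x$ from below, uniformly, on every geodesic ray from $[e]$, and then using that any point of $\BT_G^\real(K)$ lies on an apartment through $[e]$.

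\emph{Step 1 (geodesics from $[e]$).} Pick any point $p\in\BT_G^\real(K)$. By Definition \ref{def:rbuild}(\ref{item:join}) there is an apartment $f$ containing both $[e]$ and $p$. By Theorem \ref{rbuild}(\ref{aparttor}) we may write $f=f_{\Lambda',h}$, and after an affine Weyl-group reparametrization we may assume $f(0)=[e]$. Then $h\in G(\cO_K)$, so $f=f_{h^{-1}\Lambda' h,e}$ is the apartment of a maximal torus $\Lambda$ of $G$ passing through $[e]$ at the origin. By \eqref{eq:pullback},
\[ (\hat\delta_x\circ f)(v)=(\bdobody)^*(-v)=\sup_{w\in\Delta(x,\Lambda_K)}\bigl(\langle w,-v\rangle-\bdobody(w)\bigr). \]

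\emph{Step 2 (linear growth on each apartment).} Since $x\in X^{\st}(\cL)$, Theorem \ref{th:hmcrit} gives $0\in\Int\Delta(x,\Lambda_K)$. Hence there is $\varepsilon_\Lambda>0$ with $-\varepsilon_\Lambda\theta\in\Delta(x,\Lambda_K)$ for every $\theta\in\bS^r$. By Lemma \ref{lemm:wtpoly}, $\bdobody$ is finite on this polytope. Taking $w=-\varepsilon_\Lambda v/\|v\|$ yields
\[ \hat\delta_x(f(v))\ge\varepsilon_\Lambda\|v\|-C_\Lambda,\qquad C_\Lambda:=\max_{\Delta(x,\Lambda_K)}\bdobody<\infty. \]
Since $f$ is an isometry onto its image, $\|v\|=d([e],f(v))$.

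\emph{Step 3 (uniformity; the main obstacle).} We need constants $\varepsilon>0$ and $C$ that are independent of the torus $\Lambda$ through $[e]$. The weights $w_i$ range over a finite set up to $W_{G_K}$, because all the relevant tori are $G(\cO_K)$-conjugate to a fixed one, say $\Lambda=\gamma\Lambda_0\gamma^{-1}$ with $\gamma\in G(\cO_K)$. Thus the problem reduces to the fixed torus $\Lambda_0$ with $x$ replaced by $\gamma^{-1}x$, where $\gamma$ ranges over $G(\cO_K)$. We first try compactness. The data $\bigl(\ord b_i(\gamma^{-1}x)\bigr)_i$ are controlled, because $\gamma^{-1}$ preserves $L_m$ and the normalization \eqref{eq:normalize} is $G(\cO_K)$-invariant. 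So $\bdobody\le 0$ at vertices with $\ord b_i=0$, and we need a bound on the remaining values only on $\Delta$.

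A cleaner route is to avoid $C_\Lambda$ altogether and argue by contradiction. Suppose there were a sequence of tori $\Lambda_n=\gamma_n\Lambda_0\gamma_n^{-1}$ and unit vectors $\theta_n$ with slope $\lim_{t\to\infty}\hat\delta_x(f_n(t\theta_n))/t\le 1/n$. This slope equals $\max_{w\in\Delta(\gamma_n^{-1}x,\Lambda_0)}\langle w,-\theta_n\rangle$. Pass to a subsequence along which $\theta_n\to\theta$ and the finite set of supporting indices $\{i:b_i(\gamma_n^{-1}x)\ne0\}$ is constant. Then consider the coordinates $b_i(\gamma_n^{-1}x)$, $i$ with $\langle w_i,-\theta\rangle>0$, of the points $\gamma_n^{-1}x$ in the affine cone inside $(\cO_K)^{N}$. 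One then shows that these coordinates tend to $0$ in the metric topology, and applies Lemma \ref{lem:complation} to the Zariski closure of $\sigma^\vee_*(G_K)\cdot\widetilde{x}$, with $\pi$ projecting onto those coordinates. This produces a point of $\overline{G_K\cdot\widetilde x}$ on which the one-parameter subgroup $-\theta$ has only non-positive weights. Rational approximation of $\theta$ and closedness of the orbit (Remark \ref{rem:actionlift}) then give a limit point outside the orbit, or $0\notin\Int\Delta$, contradicting stability. This step, especially arranging that the coordinates genuinely tend to $0$, is where I expect the real difficulty. A similar argument bounds the additive constant by $\hat\delta_x([e])=0$ and convexity along each ray: indeed $\hat\delta_x(f(t\theta))\ge t\cdot\text{slope}$ by convexity with $\hat\delta_x(f(0))=0$, so $C=0$ in fact suffices once the slope is bounded below.

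\emph{Step 4 (conclusion).} By Step 1 every $p$ lies on some $f(\bR_{\ge0}\theta)$. Convexity along this ray, together with $\hat\delta_x([e])=0$, gives $\hat\delta_x(p)\ge\varepsilon\, d([e],p)$. Hence $\hat L_{\hat\delta_x,\le u}$ lies in the ball of radius $\max(u,0)/\varepsilon$ about $[e]$, so it is bounded.
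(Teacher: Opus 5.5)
Your Step~4, and the closing claim of Step~3 that $C=0$ suffices, rest on an inequality that goes the wrong way. If $\phi$ is convex on $[0,\infty)$ with $\phi(0)=0$, then $\phi(t)/t$ is nondecreasing. Hence $\phi(t)\le t\cdot\lim_{s\to\infty}\phi(s)/s$: convexity bounds $\phi$ \emph{above} by the asymptotic slope, not below.

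The conclusion of Step~4 is in fact false. It says $[e]$ is the unique minimizer of $\hat\delta_x$ for every stable $x$. Applied to $g\cdot x$, which is also stable and has $\MIL_{g\cdot x}=\MIL_x\cdot g^{-1}$, it would make $[g]$ the unique point of $\MIL_x$ for every $g\in G(K)$.

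What you actually need is a uniform additive constant. On the apartment $f=f_{\gamma\Lambda_0\gamma^{-1},e}$ with $\gamma\in G(\cO_K)$, one has $\hat\delta_x(f(t\theta))=\max_{i\in S_\gamma}\bigl(t\langle w_i,-\theta\rangle-c_i\bigr)$, where $c_i=\ord b_i(\gamma^{-1}x)\ge 0$. If every index with $\langle w_i,-\theta\rangle>0$ has a large $c_i$, this function stays $\le 0$ on a long initial segment even though its slope at infinity is positive. Since $G(\cO_K)$ is not compact, nothing in your proposal bounds these $c_i$ uniformly. This is exactly where closedness of the orbit and completeness of $\cO_K$ must enter.

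Your slope contradiction does not touch this point. Once the support $S$ is constant along the subsequence, $\langle w_i,\theta\rangle\ge 0$ for all $i\in S$ already contradicts $0\in\Int\Delta(\gamma_n^{-1}x,\Lambda_0)$ for a single $n$. The coordinates you hoped to show tend to $0$ are identically $0$. So uniform positivity of slopes comes for free from the finiteness of the possible supports, and it carries none of the difficulty.

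Your linear-growth route can be repaired by running the Lemma~\ref{lem:complation} argument on the constants instead of the slopes. The claim is that there is $C$ with $0\in\Int\conv\{w_i:\ord b_i(\gamma^{-1}x)\le C\}$ for all $\gamma\in G(\cO_K)$. If not, choose $\gamma_n$ such that $S'=\{i:\ord b_i(\gamma_n^{-1}x)\le n\}$ is independent of $n$ and $0\notin\Int\conv\{w_i:i\in S'\}$. The points $\sigma^\vee_*(\gamma_n^{-1})\cdot\widetilde x$ lie in $(L_m)^\vee$, and their coordinates outside $S'$ tend to $0$. Lemma~\ref{lem:complation} applied to the closed orbit then gives an orbit point $g\cdot x$ with $\Delta(g\cdot x,\Lambda_0)\subset\conv\{w_i:i\in S'\}$, contradicting stability. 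Since there are finitely many possible $S'$, you get $\hat\delta_x\ge\varepsilon\, d([e],\cdot)-C$, which does bound the sublevel sets.

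The paper needs no such estimate. It takes an escaping sequence $a_n$ in the sublevel set, lying on apartments $f_{\Lambda,\gamma_n}$ with $[\gamma_n]=[e]$. It uses convexity in the correct direction at the midpoints $f_n(v_n/2)$ to obtain lattice points $\sigma^\vee_*(h_n)\cdot\widetilde x$. The bounds at the two endpoints $[e]$ and $a_n$ then force the coordinates with $\langle w_i,u_0\rangle\neq 0$ to tend to $0$. Finally, Lemma~\ref{lem:complation} and closedness of the orbit produce an orbit point whose weight polytope lies in $u_0^\perp$, which has empty interior.
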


\begin{proof}
Pick $x\in X^{\st}(\cL) \subset X_K(K)$.
Suppose to the contrary that for some $u\in\bR$, there is a sequence 
$(a_n)$ in $\BT_G^\real (K)\cap L_{\hat{\delta}_x , \leq u} $ such that 
$\lim_{n\to\infty}d\bigl(a_n,[e]\bigr)=+\infty$. Then increasing $u$ and replacing $x$ with some point in $G(K)\cdot x$
if necessary, we assume that $u=0$ and that $a_n\in\BT_G(K)$ for every $n\in\bN$.

Let us fix a maximal torus $\Lambda : \gmult^r \to G$ as in Theorem \ref{rbuild}(\ref{aparttor}), so that
for every $n \in \bN$, recalling also Definition \ref{def:rbuild}(\ref{item:join}), there is an apartment $f_n = f_{\Lambda, \gamma_n} : \real^r \to \BT_G^\real(K)$ for some $\gamma_n\in G(K)$ such that $\{ a_n,[e]\}\subset f_n(\Gamma^r)$;
we assume without loss of generality that
$f_n^{-1}([e]) = 0$ or equivalently that
\begin{gather}
  [\gamma_n]=[e] \label{eq:trivial}
\end{gather}
(by replacing $[\gamma_n]$ with some point in $f_n(\Gamma^r)$ if necessary), and then
choose $g_n \in  \bigl( \Lambda(\bG_m^r) \bigr) (K) \cdot \gamma_n \subset G(K)$ so that
\begin{gather*}
 [g_n]=a_n. 
\end{gather*}

For each $n\in\bN$, set 
\begin{align*}
    v_n &:= f_n^{-1}(a_n)\in\Gamma^r,\quad
    \text{so that }\lim_{n\to\infty}\|v_n\|=+\infty,\quad\text{and} \\
 u_n &:= v_n/\|v_n\|\in\bS^n,
\end{align*}
and then by the completeness of $\bS^r$, there is
\begin{gather*}
    u_0:=\lim_{n\to\infty}u_n\in\bS^n
\end{gather*}
taking a subsequence of $(a_n)$ if necessary.  

Pick $m\gg 1$ (independently of $x$)
and fix an evaluation $\lshf^m|_x \simeq K$
normalized as \eqref{eq:normalize} and a weight decomposition $(b_i)_{i=1}^{N_{\cL^m}}$ of $L_m$ having the weights $(w_i)_{i=1}^{N_{\cL^m}}$ (see \eqref{eq:weitdecomp}), so that 
\begin{gather*}
    \min_i\ord\bigl(b_i(x)\bigr)=\min_{s\in L_m}\ord\bigl(s(x)\bigr)=0.
\end{gather*} 
For every $n\in\bN$, we have
\begin{gather}
 \min_i\ord\bigl(b_i(g_n\cdot x)\bigr)=\min_{s \in L_m}\ord\bigl(s(g_n \cdot x)\bigr) \overset{\eqref{eq:difference} \& \eqref{eq:defdelta}}{=} -\delta_x([g_n]) =-\hat{\delta}_x(a_n)\ge 0, \label{eq:positive}
\end{gather}
and similarly,
choosing $h_n \in \bigl( \Lambda( \bG_m^r )\bigr) (K)  \cdot \gamma_n\subset G(K)$ so that 
\begin{gather}
[h_n]=f_n\Bigl(\frac{v_n}{2}\Bigr)\Bigl(=f_n\Bigl(v_n-\frac{v_n}{2}\Bigr)\Bigr)\label{eq:middle}
\end{gather}
and using the convexity of the function $\hat{\delta}_x$, we also have
\begin{align}
    \min_i \ord \bigl(b_i(h_n \cdot x)\bigr) 
    = - \hat{\delta}_{x}([h_n])
    \geq  - \frac{\hat{\delta}_{x}([e]) + \hat{\delta}_{x}(a_n) }{2} \geq -\frac{0+0}{2}=0 \label{eq:hnxinlattice}
\end{align}
(so $b_i(h_n \cdot x) \in \cO_K$). Then
noting that for every $i$,
$\lim_{n\to\infty}\langle u_n,w_i\rangle=\langle u_0,w_i\rangle$, we also have
\begin{align}
     \min_{i:\,\langle u_0, w_i\rangle > 0 } \ord\bigl(b_i(h_n\cdot x)\bigr)
    & \overset{\eqref{eq:weitdecomp}}{=} \min_{i :\,\langle u_0, w_i \rangle> 0}
\Bigl(\Bigl\langle\frac{v_n}{2}, w_i\Bigr\rangle
+ \ord\bigl(b_i(\gamma_n \cdot x)\bigr)\Bigr)\notag\\
    & \overset{\eqref{eq:trivial}}{\ge }\min_{i :\,\langle u_0, w_i \rangle> 0}\Bigl\langle\frac{v_n}{2}, w_i\Bigr\rangle+\min_i\ord\bigl(b_i(x)\bigr)\notag\\
    & \geq \frac{\|v_n\|}{2}\cdot
\min_{i :\, \langle u_0, w_i \rangle >0}\langle u_n,w_i\rangle + 0
\to +\infty   \label{eq:wtonhalf}
\end{align}
and 
\begin{align}
 \min_{i:\, \langle u_0, w_i \rangle < 0}  \ord\bigl( b_i(h_n\cdot x)\bigr)
&\overset{\eqref{eq:middle}\&\eqref{eq:weitdecomp}}{=}
\min_{i :\,\langle u_0, w_i \rangle< 0}
\Bigl(\Bigl\langle-\frac{v_n}{2}, w_i\Bigr\rangle
+ \ord\bigl(b_i(g_n \cdot x)\bigr)\Bigr)\notag\\
& \geq 
\min_{i :\,\langle u_0, w_i \rangle< 0}\Bigl\langle-\frac{v_n}{2}, w_i\Bigr\rangle+ \min_i \ord \bigl(b_i(g_n\cdot x)\bigr)\notag\\
&\overset{\eqref{eq:positive}}{\ge}\frac{\|v_n\|}{2}\cdot \min_{i :\, \langle u_0, w_i \rangle < 0} 
(-\langle u_n, w_i\rangle) 
+0\to+\infty\label{eq:wtonhalfneg}
\end{align}
as $n\to\infty$.

Let us write the ambient $K$-linear space $M:= (L_m)_K^{\vee}$ of the affine cone of $X$, which is spanned by 
the (free) basis $(b_i^*)_{i=1}^{N_{\cL^m}}$ of the $\cO_K$-module $(L_m)^\vee$ (which is an $\cO_K$-lattice of $M$ canonically) dual to $(b_i)$. Then under the assumption that $x\in X^{\st}(\cL)$, by Remark \ref{rem:actionlift}, the orbit 
$Z := \sigma_*^\vee (G) \cdot \widetilde{x}$
of the point $\widetilde{x} := \sum_{i} (b_i(x)) \cdot b_i^*\in (L_m)^\vee$
under the dual action of $G$ is a Zariski closed subset in $M$. Let
$M^{u_0}$ be the subspace of $M$ spanned by $\{b_i^*:\langle w_i,u_0\rangle = 0\}$,
and let $\pi : M \to M/(M^{u_0})$ the projection.

By \eqref{eq:hnxinlattice}, we still have $(\sigma_*^\vee (h_n)) \cdot \widetilde{x} = \sum_{i}(b_i( h_n \cdot x )) \cdot b_i^* \in (L_m)^\vee$ for every $n\in\bN$, and by \eqref{eq:wtonhalf} and \eqref{eq:wtonhalfneg},
we also have $\lim_{n\to\infty}\pi (\sigma_*^\vee (h_n) \cdot \widetilde{x}) =0$ in the metric topology.
Hence using Lemma \ref{lem:complation}, we have
\begin{gather*}
     \bigl(\sigma_*^\vee (g)\bigr)\cdot\tilde{x}
    =\sum_{i} \bigl(b_i( g \cdot x )\bigr) \cdot b_i^* \in Z\cap(M^{u_0})
\end{gather*}
for some $g\in G(K)$,
and then we still have $g\cdot x \in X^{\st}(\cL)$ recalling Remark \ref{rem:actionlift} and also have
\begin{gather*}
b_i (g\cdot x) = 0 \text{ if }\langle w_i , u_0 \rangle \neq 0. 
\end{gather*} 
For such $g$, by the implication  ``(\ref{item:ss}) $\Rightarrow$ (\ref{item:nonnull})'' in Theorem \ref{th:hmcrit} and $\|u_0\|=1\neq 0$,
we must have not only $0 \in\Int\Delta (\Lambda , g \cdot x)$ but also
\[
\Delta (\Lambda , g\cdot x)\overset{\eqref{eq:wpdecomp}}{=}
\conv \bigl(\{ w_i : b_i (g \cdot x ) \neq 0  \}\bigr) \subset \conv \bigl( \{ w_i : \langle w_i , u_0 \rangle = 0 \} \bigr) \subset (\bR u_0)^\perp\subset\bR^r 
\]
so $\Int\Delta (\Lambda , g\cdot x)=\emptyset$,
where $\perp$ means the orthonormal complement. This is a contradiction.
\end{proof}

\begin{proof}[Proof of Theorem \ref{th:existence}]
Under the assumption in Theorem \ref{th:existence},
the Euclidean building $\BT_G^\real (K)$ is a {\em complete} 
metric $\operatorname{CAT}(0)$-space. 
Pick $x\in X^{\st}(\cL)$. Then by Lemma \ref{lemm:sublevelbdd} above
and Theorem \ref{th:convexmin} below, 
$\hat{\delta}_x$ attains its infimum at some point $a_0\in\BT_G^{\bR}(K)$.

Recalling Definition \ref{def:rbuild}(\ref{item:join}), we have 
$\{a_0,[e]\}\subset A_0$ for some apartment $A_0$ of $\BT^\real_G(K)$. Then by Lemma \ref{lemm:dap}, there is $[g_1] \in A_0 \cap(\BT_G(K))$ such that
$\delta_x( [g_1] )= \delta_{A_0}([g_1]) = \min_{a \in A_0} \delta_{A_0}(a) \le \hat{\delta}_x(a_0)$ so,
recalling that 
$\MIL_x$ and the minimal locus of $\delta_x$ are identical, the function
$\ord\rho_x$ on $\BT_G(K)$ attains its infimum at $[g_1]$.
\end{proof} 

In the above proof, we used the following general theorem.

\begin{thm}[see, e.g., {\cite{Bacak2014ConvexAnalyAndOptimInHadamSp}}]\label{th:convexmin}
Let $X$ be a complete $\operatorname{CAT}(0)$-space and $c : X \to\bR$ a convex function on $X$. 
Then the minimum of $c$ exists if the sublevel set
$L_{c,\leq u}$ is bounded in $X$ for every $u\in\bR$. 
\end{thm}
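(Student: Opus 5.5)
A minimizer will be obtained as the limit of an approximate minimizing sequence, and the CAT(0) geometry will be used to show that this sequence is Cauchy. Set $\mu:=\inf_X c$. Boundedness of a nonempty sublevel set first shows $\mu>-\infty$; this needs some lower control, which I expect to be the main obstacle. I would try to get it from convexity along geodesics issuing from a fixed base point $p$: for $y$ far from $p$, convexity of $c$ on $[p,y]$ should give a lower bound of $c(y)$ that grows at least affinely in $d(p,y)$. An abstract alternative is the weak (Δ-)compactness of bounded sets in complete CAT(0) spaces, combined with weak lower semicontinuity of continuous convex functions. Since the statement only assumes $c$ convex, I would take $c$ to be lower semicontinuous, as is standard in \cite{Bacak2014ConvexAnalyAndOptimInHadamSp}. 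For the $\hat{\delta}_x$ of Lemma \ref{lemm:delta} this is harmless, because $\hat{\delta}_x$ is apartmentwise a Legendre transform of a polyhedral function, hence locally Lipschitz.

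The plan is then as follows. Pick $y_n$ with $c(y_n)\to\mu$. All $y_n$ with $c(y_n)\le\mu+1$ lie in the bounded set $L_{c,\le\mu+1}$, say of diameter at most $D$. Let $z_{n,k}$ be the midpoint of the geodesic $[y_n,y_k]$. The CAT(0) midpoint inequality gives
\[
d(y_n,y_k)^2\le 2d(p,y_n)^2+2d(p,y_k)^2-4d(p,z_{n,k})^2
\]
for every $p$. Convexity of $c$ gives $c(z_{n,k})\le \max(c(y_n),c(y_k))$, which only shows that the midpoints are almost minimizing. This alone does not make the sequence Cauchy, since a convex function can have a flat minimum locus.

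To force a Cauchy sequence, I would switch to nested sublevel sets. The sets $C_n:=L_{c,\le\mu+1/n}$ are nonempty, closed (by lower semicontinuity), convex (by convexity of $c$ on geodesics), bounded, and decreasing. In a complete CAT(0) space, a decreasing sequence of nonempty bounded closed convex sets has nonempty intersection. To prove this, let $r_n$ be the distance from $p$ to $C_n$ and let $q_n\in C_n$ be the nearest-point projection of $p$; it exists and is unique by completeness and convexity. The numbers $r_n$ are nondecreasing and bounded by $d(p,C_1)+\operatorname{diam}C_1$, so they converge to some $r$. For $n<k$, the midpoint of $[q_n,q_k]$ lies in $C_n$, so its distance to $p$ is at least $r_n$. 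The midpoint inequality then yields $d(q_n,q_k)^2\le 2r_n^2+2r_k^2-4r_n^2\to0$. Hence $(q_n)$ is Cauchy, and its limit $q$ lies in every closed $C_n$. Therefore $c(q)=\mu$, which means the minimum is attained.

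Returning to the first step, this nested-set argument also gives $\mu>-\infty$ without extra effort. If $\mu=-\infty$, apply the same argument to $C_n:=L_{c,\le -n}$, which are bounded by hypothesis. It produces a point $q$ with $c(q)\le -n$ for all $n$, which is impossible since $c$ is real-valued. So the only genuinely delicate input is the closedness of the sublevel sets, that is, lower semicontinuity of $c$. I would secure it as described in the first paragraph.
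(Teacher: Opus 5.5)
Your proof is correct for lower semicontinuous $c$. It is essentially the standard argument behind the reference the paper cites; the paper gives no proof of its own here. The argument is that nested nonempty bounded closed convex sets in a complete $\operatorname{CAT}(0)$-space have nonempty intersection, by the nearest-point-projection and midpoint-inequality Cauchy argument. Applied to the sublevel sets, this yields the minimizer and also rules out $\inf c=-\infty$, as you observe. The preliminary ideas in your first paragraph (affine growth along geodesics, $\Delta$-compactness) are not needed.

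You are right that lower semicontinuity has to be added, because the statement as printed is false without it. On an infinite-dimensional Hilbert space, take $c(y)=\|y\|^2+\max\{0,1+\ell(y)\}$ with $\ell$ a discontinuous linear functional. This $c$ is real-valued and convex, and its sublevel sets are bounded. For every $\varepsilon>0$ there is $y$ with $\|y\|\le\varepsilon$ and $\ell(y)\le-1$, so $\inf c=0$. But $c$ vanishes nowhere, since $c(0)=1$.

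For the paper's $\hat{\delta}_x$ the extra hypothesis does hold, but your justification should be sharpened. Apartmentwise local Lipschitzness does not by itself give continuity on $\BT_G^\real(K)$, which is not locally compact: a sequence can run through infinitely many apartments. The right reason is as follows. On every apartment $f_{\Lambda,g}$ (same $\Lambda$ for all apartments), the function is a maximum of finitely many affine functions whose gradients lie in the fixed finite set $\{-w_i\}$. It is therefore Lipschitz with a constant independent of the apartment, namely $\max_i\|w_i\|$ up to the fixed normalization of the metric on $\bR^r$. Since any two points lie in a common isometrically embedded apartment, $\hat{\delta}_x$ is globally Lipschitz on the building, in particular lower semicontinuous.
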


\section{Applications and computations}\label{sec:apply}
We give applications of Theorem \ref{th:genmininvloc} and computation of $\MIL_\varphi$ so of $\RSS_\varphi$ in the dynamical setting mentioned in Section \ref{sec:intro}. Recall that $X=X_{n,d}= \overline{\Hom_d(\mathbb{P}^n)} \simeq \proj^{N-1}$, where
\begin{gather}
 N=N_{n,d}= (n+1) \binom{n+d}{d},\label{eq:dim} 
\end{gather}
equipped with $\PGL_{n+1}$-$,   
\GL_{n+1}$-, or $\SL_{n+1}$-linearized line bundle $\cL = \oshf_{\proj^{N-1}}(1)$. By a technical reason, we consider herewith the $\SL_{n+1}$-conjugation action.

Recall that, as a projective space of $\sln{n+1}$-representation, 
\[ 
X=\ol{\Hom_d(\mathbb{P}^n)} \simeq \proj (V_1^* \otimes V_d), 
\]
where $V_1 = K^{n+1}$ is the standard representation of $\sln{n+1}$ and $V_\nu := S_\nu V_1$ for the Schur functor $S_\nu$ associated with a partition $\nu$ (see \cite{fulton1997young}).
From now on, we assume that $\chara k = 0$. By Pieli's rule, there is an isomorphism of $\sln{n+1}$-representations 
\[ 
\iota : V_1^* \otimes V_d \to V_{(d+1,1^{n-1})} \oplus V_{d-1}, 
\]
which is taken over $\rat (\subset \oring_K)$.

Now for example set $(n,d) = (2,4)$, and pick $\alpha \in K^\times$ and consider $\varphi_\alpha \in\Hom_4(\bP^2)$,
\[ 
\varphi_\alpha ([x_0 : x_1 : x_2] ) = [ x_0^4 + \alpha x_0^2x_1x_2 : x_1^4 + \alpha x_0x_1^2x_2 : x_2^4 + \alpha x_0x_1x_2^2 ]. 
\]
Then under the (projectivization of the) isomorphism $\iota$, we have 
\[ 
\iota (\varphi_\alpha) = [( v,  x_0^3 + x_1^3 + x_2^3 + 3\alpha x_0x_1x_2 )] \in \proj (V_{(4,1)} \oplus V_{3})
\]
for some $v \in V_{4,1}$.
Now we recall Mumford's criterion of stability (resp. semistability) for a plane curve  \cite[Section 4.2]{git1994}: {\em a point $[f] \in  \proj (H^0( \proj^2 , \oring (3)) )$ is stable (resp. semistable) if the curve $C = V_+(f) ( = (f=0))$ in $\proj^2$ 
has no singular point (resp. cusp or triple point)}. In this case, the same stability conditions can be applied for $(v,f) \in V_{4,1} \oplus V_{3}$.

If $\ord \alpha \geq 0$ and $\alpha_k ^3 \neq -1$, then
\[ 
\iota\bigl( \spe (\varphi_\alpha)\bigr) = [ (v_k, x_0^3 + x_1^3 + x_2^3 +3 \alpha_k x_0x_1x_2)], 
\]
so that $\spe ( \varphi_\alpha ) \in X^{\st}_k(\cL_k)$ by the above criterion.
Then by the final statement in Theorem \ref{th:genmininvloc}, we have 
$\RSS_{\varphi_\alpha}= \RS_{\varphi_\alpha}=\{ [e] \}$.

On the other hand, if $\ord \alpha < 0$, then
\[ 
\iota \bigl(\spe (\varphi_\alpha )\bigr) = [(0, x_0x_1x_2 )], 
\]
so that $\spe ( \varphi_\alpha ) \in X^{\sst}_k(\cL_k)$ by the above criterion. Under the group scheme homomorphism 
\begin{gather*}
    \Lambda : \bG_m^2 \ni(t_1, t_2) \mapsto [ \diag (t_0 := (t_1t_2)^{-1}, t_1, t_2 ) ]\in \SL_3, 
\end{gather*}
we have
\[ 
\bigl( \Lambda (t_0,t_1,t_2) \cdot \varphi_\alpha\bigr)([x_0 : x_1:x_2]) = [ t_0^3 x_0^4 + \alpha x_0^2x_1x_2 : t_1^3 x_1^4 + \alpha x_0x_1^2x_2 : t_2^3 x_2^4 + \alpha x_0x_1x_2^2 ],
\]
so that choosing some $\rho\in \invs$ such that 
$\rho ( \varphi_\alpha ) \neq 0$ and recalling Theorem \ref{rbuild}(\ref{aparttor}), we compute
\begin{align*} 
    \RSS_{\varphi_\alpha} \cap \bigl(f_{\Lambda ,e} (\real^2)\bigr) 
    & \overset{\text{Thm  \ref{th:genmininvloc}}}{=} \MIL_{\varphi_\alpha} \cap \bigl(f_{\Lambda , e} (\real^2)\bigr) \\
& = 
\Bigl\{ [\Lambda (t_1,t_2)] \in \BT (\SL_3) : \ord \rho_{\varphi_\alpha}([\Lambda(t_1,t_2)]) 
= \min\rho_{\varphi_\alpha}
\bigl(\overset{\text{Thm  \ref{th:genmininvloc}}}{=} \rho_{\varphi_\alpha} ([e])\bigr)\Bigr\} \\
& = \Bigl\{ [\Lambda (t_1,t_2)] \in \BT (\SL_3) :\min_{i\in\{0,1,2\}} \ord(t_i^3) \geq \ord\alpha\Bigr\}\\ 
& = \Bigl\{ [\Lambda (t_1 , t_2)] \in \BT (\SL_3) : \min_{i\in\{0,1,2\}} ( \ord t_i ) \geq (\ord \alpha)/3  \Bigr\}  
\end{align*}
so $\RSS_{\varphi_\alpha}\neq\emptyset$.
For instance, choosing $(\ord t_1 , \ord t_2) = (1/3, 1/3) \cdot \ord \alpha$, we have
\begin{gather*}
   \iota \bigl( \spe  ( \Lambda (t_1,t_2) \cdot \varphi_\alpha ) \bigr) 
 = [( v'_k, ( (t_1^3 x_1^3 + t_2^3x_2^3) + 3 \alpha x_0x_1x_2  )_k ]\quad\text{for some }v' \in V_{4,1}, 
\end{gather*}
which is identified with (not an endomorphism but) a rational map
\[
\bigl( \spe (\Lambda (t_1,t_2) \cdot [\varphi_\alpha ] )\bigr):
[x_0,x_1,x_2]\mapsto[x_0x_1x_2: u_1 x_1^3+x_0x_1x_2 : u_2x_2^3 + x_0x_1x_2 ]
\]
of $\bP^2_k$ having the indeterminacy locus $\{ [1:0:0]\}$ for some $u_0,u_1\in k$. 

\begin{acknowledgement}
R.G was supported by JSPS Grant-in-Aid for JSPS Fellows 25KJ0090, and
Y.O was partially supported by JSPS Grant-in-Aid for Scientific Research (C), 23K03129.
 This research was partly done during the authors' visiting the Research Institute for Mathematical Sciences,
an International Joint Usage/Research Center located in Kyoto University.
\end{acknowledgement}


\providecommand{\bysame}{\leavevmode\hbox to3em{\hrulefill}\thinspace}
\providecommand{\MR}{\relax\ifhmode\unskip\space\fi MR }
\providecommand{\MRhref}[2]{%
  \href{http://www.ams.org/mathscinet-getitem?mr=#1}{#2}
}
\providecommand{\href}[2]{#2}

\end{document}